\newlength\lineindent
\providecommand{\ds}{\, \mathrm{d}s}
\providecommand{\dx}{\, \mathrm{d}x}
\providecommand{\dy}{\, \mathrm{d}y}
\providecommand{\tria}{\mathcal{T}}
\providecommand{\nodes}{\mathcal{N}}
\providecommand{\faces}{\mathcal{F}}
\providecommand{\cJ}{\mathcal{J}}
\providecommand{\cE}{\mathcal{E}}
\providecommand{\Inc}{\mathcal{I}_\textup{NC}}
\providecommand{\osc}{\textup{osc}}
\pgfplotsset{compat=1.18} 
\pgfplotsset{
  width=.65\linewidth,
  axis background/.style={fill=black!5!white},
  grid style={densely dotted,semithick},
  legend style={
    legend columns=1,
    legend pos=outer north east
  },
  compat=newest 
}
\begin{document}
\lstset{language=Python,
basicstyle=\small, 
keywordstyle=\color{black}\bfseries, 
commentstyle=\color{blue}, 
stringstyle=\ttfamily, 
showstringspaces=false,
numbers=left, 
numberstyle=\small, 
numbersep=10 pt,
xleftmargin= 27pt,
}

\author[J.~Storn]{Johannes Storn}
\address[J.~Storn]{Faculty of Mathematics \& Computer Science, Institute of Mathematics, Leipzig University, Augustusplatz 10, 04109 Leipzig, Germany}
\email{jstorn@math.uni-bielefeld.de}

\keywords{$p$-Laplace, Crouzeix--Raviart FEM, medius analysis, quasi-optimality, conforming companion}
\subjclass[2020]{
 	35J62, 
	65N12, 
	65N15,  
	65N30  
}

\thanks{The work of the author was supported by the Deutsche Forschungsgemeinschaft (DFG, German Research Foundation) -- SFB 1283/2 2021 -- 317210226.}

\title[Quasi-optimality of the CR FEM for the $p$-Laplacian]{Quasi-optimality of the Crouzeix--Raviart FEM for $p$-Laplace-type problems}

\begin{abstract}
We verify quasi-optimality of the Crouzeix--Raviart FEM for nonlinear problems of $p$-Laplace type. More precisely, we show that the error of the Crouzeix--Raviart FEM with respect to a quasi-norm is bounded from above by a uniformly bounded constant times the best-approximation error plus a data oscillation term. As a byproduct, we verify a novel, more localized a priori error estimate for the conforming lowest-order Lagrange FEM.
\end{abstract}
\maketitle

\section{Introduction}
While non-conforming finite element methods can offer beneficial properties compared to their conforming counterparts, such as fewer degrees of freedom, lower energy bounds, improved local approximation properties, and convergence in the presence of a Lavrentiev gap \cite{Ortner11,BalciOrtnerStorn22}, their theoretical analysis is considerably more intricate.
For example, classical a priori error analysis often requires additional smoothness of the solution; see~\cite[Sec.~2.1]{Brenner15}. This has led to the misconception that non-conforming approaches are inferior when approximating singular solutions.
For linear problems, this misconception was refuted by Gudi \cite{Gudi10,Gudi10b}; see also \cite{CarstensenSchedensack15,CarstensenGallistlSchedensack15,HuangYu21}. He introduced medius analysis -- a blend of a priori and a posteriori techniques -- which enables the verification of quasi-optimal approximation properties for non-conforming FEMs.
 
We extend the medius analysis and the resulting quasi-optimality results to nonlinear problems. In particular, we approximate the minimizer of a convex minimization problem over the space $V \coloneqq W^{1,1}_0(\Omega)$.
Given a uniformly convex N-function $\phi\colon [0,\infty) \to \mathbb{R}$ (see Definition~\ref{eq:defNfunction} below) and a right-hand side $f\in L^\infty(\Omega)$, the minimization problem  takes the form
\begin{align}\label{eq:MinProb}
u = \argmin_{v \in V} \mathcal{J}(v)\qquad\text{with }\cJ(v) \coloneqq \int_\Omega \phi(|\nabla v|) \dx - \int_\Omega fv\dx.
\end{align}
Equivalently, we want to approximate the solution $u\in V$ to the variational problem
\begin{align}\label{eq:pLaplaPrb}
\int_\Omega \frac{\phi'(|\nabla u|)}{|\nabla u|} \nabla u\cdot \nabla v\dx = \int_\Omega fv\dx\qquad\text{for all }v\in V. 
\end{align}
An example fitting into this framework is the $p$-Dirichlet energy with $\phi'(t) = (t+\kappa)^{p-2}t$ for all $t\geq 0$ with $p\in (1,\infty)$ and $\kappa \geq0$, as well as space $V = W^{1,p}_0(\Omega)$ and $f\in L^{p'}(\Omega)$ with $p' = p/(p-1)$. Setting $\kappa \coloneqq 0$ leads in \eqref{eq:pLaplaPrb} to the $p$-Laplace problem $-\divergence (|\nabla u|^{p-2}\nabla u) = f$.

We discretize the space $V$ by Crouzeix--Raviart finite elements defined as follows.
Let $\tria$ be a  regular triangulation of the domain $\Omega\subset \mathbb{R}^d$ with dimension $d>1$ in the sense of Ciarlet with faces ($d-1$ dimensional hypersurfaces) $\mathcal{F}$, interior faces $\mathcal{F}(\Omega)\coloneqq  \lbrace \gamma \in \faces\colon  \gamma \not\subset \partial \Omega\rbrace$, and boundary faces $\mathcal{F}(\partial \Omega) \coloneqq \faces \setminus \faces(\Omega)$. We denote by $\textup{mid}(\gamma)$ the barycenter of a face $\gamma \in \faces$ and define the space of piecewise affine functions and the Crouzeix--Raviart finite element space by
\begin{align*}
\begin{aligned}
\mathbb{P}_1(\tria)& \coloneqq \lbrace v_h \in L^\infty(\Omega)\colon v_h|_T \text{ is an affine polynomial for all }T\in \tria\rbrace,\\
V_h &\coloneqq \lbrace v_h \in \mathbb{P}_1(\tria) \colon v_h\text{ is continuous in } \textup{mid}(\gamma)\text{ for all }\gamma\in \faces(\Omega)\\
&\qquad\qquad\qquad\quad\ \text{ and } v_h(\textup{mid}(\gamma')) = 0\text{ for all }\gamma' \in \mathcal{F}(\partial \Omega)\rbrace.
\end{aligned}
\end{align*}
With the elementwise application of the gradient $(\nabla_h v_h)|_T \coloneqq \nabla (v_h|_T)$ for all $v_h \in V_h$ and $T\in \tria$, the resulting discretized problem seeks the minimizer
\begin{align}\label{eq:CRfem}
u_h = \argmin_{v_h \in V_h} \mathcal{J}(v_h)\qquad\text{with }\cJ(v_h) \coloneqq \int_\Omega \phi(|\nabla_h v_h|)\dx - \int_\Omega fv_h\dx.
\end{align}
Our main result is the following quasi-optimality result. The statement involves the notion of distance $\lVert F(\nabla u) - F(\nabla_h \bigcdot)\rVert_{L^2(\Omega)}$ introduced and discussed in Section~\ref{sec:ConceptOfDist}, as well as a data oscillation term $\osc^2(u,f;\tria)$ defined in~\eqref{eq:DefOsc}.
\begin{theorem}[Main result]\label{thm:mainResult}
Let the integrand $\phi$ be a uniformly convex N-function. Then the solutions $u\in V$ to \eqref{eq:MinProb} and $u_h \in V_h$ to \eqref{eq:CRfem} satisfy
\begin{align*}
& \lVert F(\nabla u) - F(\nabla_h u_h) \rVert_{L^2(\Omega)}^2 \lesssim \min_{v_h \in V_h} \lVert F(\nabla u) - F(\nabla_h v_h) \rVert_{L^2(\Omega)}^2 + \osc^2(u,f;\tria). 
\end{align*}
\end{theorem}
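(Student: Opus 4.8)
\medskip
\noindent\emph{Proof plan.}
The plan is to combine the strong monotonicity of the nonlinear operator $S(z) \coloneqq \phi'(|z|)\,|z|^{-1}z$ (the derivative of $z \mapsto \phi(|z|)$) with a medius-type treatment of the nonconformity, the crucial tool being a conforming companion operator $J \colon V_h \to V$. Abbreviate $\sigma \coloneqq S(\nabla u)$ and $\sigma_h \coloneqq S(\nabla_h u_h)$. I shall use the standard equivalences for uniformly convex N-functions recalled in the preceding sections, namely $(S(z) - S(w)) \cdot (z - w) \sim |F(z) - F(w)|^2 \sim \phi_{|z|}(|z - w|)$ (with $\phi_a$ the shifted N-function associated with $\phi$), together with the shifted Young inequality, which for every $\delta > 0$ yields $(S(z) - S(w)) \cdot \xi \le \delta\, |F(z) - F(w)|^2 + c_\delta\, |F(z) - F(z - \xi)|^2$. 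I also use that \eqref{eq:pLaplaPrb} provides $\sigma \in \Lpdiv$ with $-\divergence\sigma = f$, and that \eqref{eq:CRfem} is equivalent to the discrete Euler--Lagrange equation $\int_\Omega \sigma_h \cdot \nabla_h v_h \dx = \int_\Omega f v_h \dx$ for all $v_h \in V_h$.

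\emph{Coercivity and splitting.} For an arbitrary $v_h \in V_h$, the monotonicity equivalence gives
\[
\lVert F(\nabla u) - F(\nabla_h u_h) \rVert_{L^2(\Omega)}^2 \lesssim \int_\Omega (\sigma - \sigma_h) \cdot (\nabla u - \nabla_h u_h) \dx = \mathrm{I} + \mathrm{II},
\]
with $\mathrm{I} \coloneqq \int_\Omega (\sigma - \sigma_h) \cdot (\nabla u - \nabla_h v_h) \dx$ and $\mathrm{II} \coloneqq \int_\Omega (\sigma - \sigma_h) \cdot \nabla_h (v_h - u_h) \dx$. Applying the shifted Young inequality pointwise with $\xi \coloneqq \nabla u - \nabla_h v_h$ (so that $\nabla u - \xi = \nabla_h v_h$) bounds $\mathrm{I} \le \delta\, \lVert F(\nabla u) - F(\nabla_h u_h) \rVert_{L^2(\Omega)}^2 + c_\delta\, \lVert F(\nabla u) - F(\nabla_h v_h) \rVert_{L^2(\Omega)}^2$, and the first summand will be absorbed into the left-hand side at the end.

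\emph{The nonconformity functional.} The discrete equation tested with $v_h - u_h \in V_h$ turns $\mathrm{II}$ into $N(v_h) - N(u_h)$, where $N(z_h) \coloneqq \int_\Omega \sigma \cdot \nabla_h z_h \dx - \int_\Omega f z_h \dx$; by \eqref{eq:pLaplaPrb} the functional $N$ vanishes on $V$. Inserting the conforming companion $J z_h \in V$ and exploiting that $J$ preserves, on each element, the integral mean of the function and of the (elementwise constant) gradient, gives
\[
N(z_h) = \int_\Omega (\sigma - \Pi_0 \sigma) \cdot (\nabla_h z_h - \nabla J z_h) \dx - \int_\Omega (f - \Pi_0 f)(z_h - J z_h) \dx.
\]
H\"older's inequality, the localized stability of $J$ in the form $\lVert \nabla_h z_h - \nabla J z_h \rVert_{L^p(\Omega)} + \lVert h^{-1}(z_h - J z_h) \rVert_{L^p(\Omega)} \lesssim \lVert \nabla_h(z_h - v) \rVert_{L^p(\Omega)}$ for every $v \in V$ (applied with $v = u$, so the right-hand side is controlled by $\lVert F(\nabla u) - F(\nabla_h z_h) \rVert_{L^2(\Omega)}$ through the N-function equivalences), and Young's inequality in its shifted form then yield, for every $\delta > 0$,
\[
|N(z_h)| \le \delta\, \lVert F(\nabla u) - F(\nabla_h z_h) \rVert_{L^2(\Omega)}^2 + c_\delta\, \osc^2(u, f; \tria).
\]
Using this for $z_h = u_h$ (absorbing $\delta\, \lVert F(\nabla u) - F(\nabla_h u_h) \rVert_{L^2(\Omega)}^2$ on the left) and for $z_h = v_h$, choosing $\delta$ small, and finally taking $v_h$ to be the best approximation, produces the asserted estimate.

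\emph{Main obstacle.} The heart of the matter is the conforming companion operator of the third step: one needs a map into $V$ intersected with a space of continuous piecewise polynomials that simultaneously preserves the elementwise means of a function and of its gradient and satisfies the \emph{localized} stability estimate above, i.e.\ $\nabla_h z_h - \nabla J z_h$ and $h^{-1}(z_h - J z_h)$ are bounded on a fixed-size neighbourhood by $\nabla_h(z_h - v)$ for an arbitrary conforming $v$. This localized bound is exactly the ``novel more localized a priori error estimate for the conforming lowest-order Lagrange FEM'' promised in the abstract, and establishing it --- via an explicit correction by face and element bubble functions, combined with the $\Delta_2$-type structure of $\phi$ and its conjugate --- is the genuinely technical part. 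A secondary difficulty is the systematic use of shifted N-functions required to transfer all the $L^p$-type bounds above into the quasi-norm $\lVert F(\nabla u) - F(\nabla_h \,\cdot\,) \rVert_{L^2(\Omega)}$ and to carry out the absorptions.
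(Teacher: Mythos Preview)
Your overall architecture (monotonicity plus a conforming companion to control the nonconformity) is correct and close in spirit to the paper, though the paper uses a different splitting: it bounds the discrete--discrete distance $\|F(\nabla_h v_h)-F(\nabla_h u_h)\|_{L^2(\Omega)}^2\eqsim\langle A(\nabla_h v_h)-A(\nabla_h u_h),\nabla_h e_h\rangle_\Omega$ with $e_h=v_h-u_h$, applies the plain $\mathbb{P}_1$ vertex-averaging companion $\mathcal{E}$ (which does \emph{not} preserve elementwise means) to $e_h$, and splits into three terms involving $\mathcal{E}e_h$. Your nonconformity functional $N$ together with a mean-preserving companion would be a viable alternative route in principle.

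The genuine gap is in the companion estimate. You state the stability of $J$ in $L^p$ and then claim that $\lVert\nabla_h(z_h-u)\rVert_{L^p(\Omega)}$ is ``controlled by $\lVert F(\nabla u)-F(\nabla_h z_h)\rVert_{L^2(\Omega)}$ through the N-function equivalences''. This is false for $p<2$: with $\phi(t)=t^p/p$ one has $|F(P)-F(Q)|^2\eqsim(|P|+|Q|)^{p-2}|P-Q|^2\le|P-Q|^p$, so the quasi-norm is \emph{weaker} than the $L^p$ gradient distance and cannot bound it; H\"older plus $L^p$-stability therefore yields only a suboptimal estimate in the wrong norm. What you actually need is stability of the companion directly in the shifted Orlicz modular, $\int_\Omega\phi_a(|\nabla_h(z_h-Jz_h)|)\dx\lesssim\int_\Omega\phi_a(|\nabla_h(z_h-v)|)\dx$ plus a data term, with a \emph{piecewise-constant} shift $a$. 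Far from being the ``secondary difficulty'' you name, this is the core technical contribution: the paper devotes Lemmas~\ref{lem:TwoCases} and~\ref{lem:confComp} to it, and the decisive new idea is a dichotomy showing that on every interior face either the tangential jump of $\nabla_h e_h$ controls its full jump, or the normal jump of $A(\nabla_h e_h)$ controls the full jump of $A(\nabla_h e_h)$---only this, combined with bubble-function arguments and iterated shift changes, delivers the required modular bound. (Two smaller slips: the $(\sigma-\Pi_0\sigma)$ contribution does not land in $\osc^2(u,f;\tria)$ as defined but in the best-approximation error, and the ``novel localized estimate for the Lagrange FEM'' of the abstract is Theorem~\ref{thm:AprioriConf}, a \emph{consequence} of the main result rather than the companion bound you invoke as a tool.)
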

Theorem~\ref{thm:mainResult}, verified in Section~\ref{sec:MediusAnalysis}, marks the first a priori error estimate for non-conforming approximations for $p$-Laplace type problems that bounds the error by a best-approximation error plus a higher-order data oscillation term. For the special case $\phi(t) = t^2/2$ it recovers the best-approximation result of Gudi \cite{Gudi10} in the sense that one obtains with local mesh-size $(h_\tria)|_T \coloneqq h_T \coloneqq \textup{diam}(T)$ for all $T\in \tria$ and $L^2$-orthogonal projection onto piecewise constant functions $\Pi_0$ the estimate
\begin{align*}
& \lVert \nabla u - \nabla_h u_h \rVert_{L^2(\Omega)}^2 \lesssim \min_{v_h \in V_h} \lVert \nabla u - \nabla_h v_h \rVert_{L^2(\Omega)}^2 + \lVert h_\tria (1-\Pi_0) f \rVert^2_{L^2(\Omega)}. 
\end{align*}
A consequence of Theorem~\ref{thm:mainResult} is the following localized a priori error estimate, involving the space of piecewise constant vector valued functions 
\begin{align*}
\mathbb{P}_0(\tria;\mathbb{R}^d) \coloneqq \lbrace \chi_h \in L^\infty(\Omega;\mathbb{R}^d)\colon \chi_h|_T \in \mathbb{R}^d\text{ for all }T\in \tria\rbrace.
\end{align*}
\begin{theorem}[A priori error estimate -- non-conforming]\label{thm:Apriori}
For integrands $\phi$ that are uniformly convex N-functions the solutions $u\in V$ to \eqref{eq:MinProb} and $u_h \in V_h$ to \eqref{eq:CRfem} satisfy
\begin{align*}
 \lVert F(\nabla u) - F(\nabla_h u_h) \rVert_{L^2(\Omega)}^2& \lesssim \min_{\chi_h \in \mathbb{P}_0(\tria;\mathbb{R}^d)} \lVert F(\nabla u) - \chi_h \rVert_{L^2(\Omega)}^2 + \osc^2(u,f;\tria).
 \end{align*}
\end{theorem}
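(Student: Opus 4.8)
The plan is to derive the estimate from Theorem~\ref{thm:mainResult} by bounding the Crouzeix--Raviart best-approximation error appearing there against the piecewise-constant best-approximation error. Hence it suffices to prove
\begin{align*}
\min_{v_h \in V_h} \lVert F(\nabla u) - F(\nabla_h v_h)\rVert_{L^2(\Omega)}^2 \lesssim \min_{\chi_h \in \mathbb{P}_0(\tria;\mathbb{R}^d)} \lVert F(\nabla u) - \chi_h\rVert_{L^2(\Omega)}^2 ,
\end{align*}
after which Theorem~\ref{thm:mainResult} yields the claim, the oscillation term being carried over verbatim. This reduction is not automatic: on the left one minimizes over arguments $\nabla_h v_h$ that must both be $F$ of something and lie in the proper subspace $\nabla_h V_h \subsetneq \mathbb{P}_0(\tria;\mathbb{R}^d)$, whereas on the right $\chi_h$ is an arbitrary piecewise-constant vector field.

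For the upper bound I would test the left-hand minimum with the Crouzeix--Raviart interpolant $v_h = \Inc u \in V_h$, characterized by $\int_\gamma (u - \Inc u)\ds = 0$ for all faces $\gamma \in \faces$; this is well defined because $u \in W^{1,1}_0(\Omega)$ possesses an $L^1$-trace on each face which vanishes on $\faces(\partial\Omega)$. The divergence theorem applied componentwise, together with the constancy of the unit normal $n_\gamma$ on each flat face, gives $\int_T \nabla(u - \Inc u)\dx = \sum_{\gamma\subset\partial T}n_\gamma\int_\gamma (u-\Inc u)\ds = 0$ for every $T\in\tria$; since $\nabla_h\Inc u$ is piecewise constant, this is the commuting identity $\nabla_h\Inc u = \Pi_0\nabla u$. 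With the elementwise mean $\langle g\rangle_T \coloneqq |T|^{-1}\int_T g\dx$ we thus obtain
\begin{align*}
\min_{v_h \in V_h}\lVert F(\nabla u) - F(\nabla_h v_h)\rVert_{L^2(\Omega)}^2 \le \lVert F(\nabla u) - F(\Pi_0\nabla u)\rVert_{L^2(\Omega)}^2 = \sum_{T\in\tria}\lVert F(\nabla u) - F(\langle\nabla u\rangle_T)\rVert_{L^2(T)}^2 .
\end{align*}

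It therefore remains to establish, for each $T\in\tria$, the local estimate
\begin{align*}
\lVert F(\nabla u) - F(\langle\nabla u\rangle_T)\rVert_{L^2(T)}^2 \lesssim \lVert F(\nabla u) - \langle F(\nabla u)\rangle_T\rVert_{L^2(T)}^2 = \min_{\chi\in\mathbb{R}^d}\lVert F(\nabla u) - \chi\rVert_{L^2(T)}^2 ,
\end{align*}
since summation over $T$ and the identity $(\Pi_0 F(\nabla u))|_T = \langle F(\nabla u)\rangle_T$ then finish the proof. This is the best-approximation property of the nonlinear map $F$ discussed in Section~\ref{sec:ConceptOfDist} (cf.\ the best-approximation lemmas in the $p$-Laplace literature): on any simplex $T$ the quantities $\int_T |F(\nabla u) - F(\langle\nabla u\rangle_T)|^2\dx$, $\int_T |F(\nabla u) - \langle F(\nabla u)\rangle_T|^2\dx$ and $\inf_{\xi\in\mathbb{R}^d}\int_T |F(\nabla u) - F(\xi)|^2\dx$ are comparable with constants depending only on $\phi$ and shape-regularity, so $\xi = \langle\nabla u\rangle_T$ is a quasi-minimizer. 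I expect this comparison --- passing from ``$F$ evaluated at the mean gradient'' to ``the mean of $F(\nabla u)$'' --- to be the only genuine difficulty: for $\phi(t)=t^2/2$ one has $F(a)=a$ and the two sides coincide, whereas for a general uniformly convex N-function it hinges on the equivalence $|F(a)-F(b)|^2 \simeq \phi_{|a|}(|a-b|)$ between the $F$-distance and the shifted N-function $\phi_{|a|}$ and on the attendant change-of-shift estimates. The remaining ingredients --- the commuting property of $\Inc$ and the appeal to Theorem~\ref{thm:mainResult} --- are routine.
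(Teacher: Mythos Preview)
Your proposal is correct and follows the same route as the paper: apply Theorem~\ref{thm:mainResult}, test the best-approximation term with $v_h = \Inc u$, invoke the commuting identity $\nabla_h\Inc u = \Pi_0\nabla u$, and then compare $\lVert F(\nabla u)-F(\Pi_0\nabla u)\rVert_{L^2}^2$ with the piecewise-constant best approximation of $F(\nabla u)$. For the last step the paper does not invoke a literature lemma or any shift-change; it simply writes $\lVert F(\nabla u)-F(\Pi_0\nabla u)\rVert_{L^2(\Omega)}^2 \eqsim \int_\Omega \phi_{|\nabla u|}(|\nabla u-\Pi_0\nabla u|)\dx$ via Proposition~\ref{prop:Distances}, bounds this by $2\int_\Omega\phi_{|\nabla u|}(|\nabla u-\chi_\tria|)\dx \eqsim \lVert F(\nabla u)-F(\chi_\tria)\rVert_{L^2(\Omega)}^2$ for arbitrary $\chi_\tria\in\mathbb{P}_0(\tria;\mathbb{R}^d)$ using the triangle inequality (Proposition~\ref{prop:Properties}\ref{itm:triangleInequ}) and Jensen's inequality, and finishes since $F\colon\mathbb{R}^d\to\mathbb{R}^d$ is a bijection.
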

We verify Theorem~\ref{thm:mainResult} in Section~\ref{sec:MediusAnalysis} by combining medius analysis with the concept of shifted N-functions introduced and investigated by Diening and co-authors in~\cite{DieningEttwein08,DieningKreuzer08,DieningFornasierTomasiWank20}.
A brief summary of the latter is displayed in Section~\ref{sec:ConceptOfDist}.
A major difficulty that occurs in our analysis are tangential jump contributions, which we successfully treat by extending a posteriori techniques for the linear case such as in \cite{CarstensenHu07,CarstensenPuttkammer20} to the nonlinear setting.
A byproduct of our analysis is the following estimate, verified in Section~\ref{sec:Confomring}, for the conforming method
\begin{align}\label{eq:LagrangeFEM}
u_h^c = \argmin_{v_h^c\in S_0^1(\tria)} \cJ(v_h^c)\qquad\text{with }S_0^1(\tria) \coloneqq \mathbb{P}_1(\tria) \cap W^{1,1}_0(\Omega).
\end{align}
\begin{theorem}[A priori error estimate -- conforming]\label{thm:AprioriConf}
For integrands $\phi$ that are uniformly convex  N-functions the solutions $u\in V$ to \eqref{eq:MinProb} and $u_h^c \in S^1_0(\tria)$ to \eqref{eq:LagrangeFEM} satisfy
\begin{align*}
 \lVert F(\nabla u) - F(\nabla u^c_h) \rVert_{L^2(\Omega)}^2& \lesssim  \min_{\chi_h \in \mathbb{P}_0(\tria;\mathbb{R}^d)} \lVert F(\nabla u) - \chi_h \rVert_{L^2(\Omega)}^2  + \osc^2(u,f;\tria).
 \end{align*}
\end{theorem}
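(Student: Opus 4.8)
The plan is to turn the error into an energy difference and then into a best-approximation error in the conforming space, which works here because $u_h^c\in S_0^1(\tria)\subset V$ -- in contrast to the Crouzeix--Raviart solution. Since $u$ solves the Euler--Lagrange equation~\eqref{eq:pLaplaPrb} and $\phi$ is a uniformly convex N-function,
\[
\cJ(v)-\cJ(u)=\int_\Omega\Bigl(\phi(|\nabla v|)-\phi(|\nabla u|)-\tfrac{\phi'(|\nabla u|)}{|\nabla u|}\,\nabla u\cdot(\nabla v-\nabla u)\Bigr)\dx\eqsim\lVert F(\nabla u)-F(\nabla v)\rVert_{L^2(\Omega)}^2
\]
for all $v\in V$, the equivalence being one of the shifted-N-function estimates recalled in Section~\ref{sec:ConceptOfDist}. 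Using this with $v=u_h^c$, then the minimality of $u_h^c$ in $S_0^1(\tria)$, and then the equivalence once more gives the C\'ea-type bound
\[
\lVert F(\nabla u)-F(\nabla u_h^c)\rVert_{L^2(\Omega)}^2\eqsim\cJ(u_h^c)-\cJ(u)\le\cJ(v_h^c)-\cJ(u)\eqsim\lVert F(\nabla u)-F(\nabla v_h^c)\rVert_{L^2(\Omega)}^2
\]
for every $v_h^c\in S_0^1(\tria)$. It thus remains to construct one conforming competitor $v_h^c$ with $\lVert F(\nabla u)-F(\nabla v_h^c)\rVert_{L^2(\Omega)}^2\lesssim\min_{\chi_h\in\mathbb{P}_0(\tria;\mathbb{R}^d)}\lVert F(\nabla u)-\chi_h\rVert_{L^2(\Omega)}^2+\osc^2(u,f;\tria)$; this localized conforming approximation estimate is the actual content of the theorem.

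A natural way to reuse the non-conforming analysis is to set $v_h^c\coloneqq E_h(\Inc u)\in S_0^1(\tria)$, where $\Inc u\in V_h$ is the Crouzeix--Raviart interpolant, which enjoys the commuting property $\nabla_h\Inc u=\Pi_0\nabla u$, and $E_h\colon V_h\to S_0^1(\tria)$ is a nodal-averaging operator (alternatively one may take a Scott--Zhang-type interpolant of $u$ directly). A quasi-triangle inequality for the shifted N-functions splits $\lVert F(\nabla u)-F(\nabla v_h^c)\rVert_{L^2(\Omega)}^2$ into a consistency part $\lVert F(\nabla u)-F(\Pi_0\nabla u)\rVert_{L^2(\Omega)}^2$ and a nonconformity part $\lVert F(\nabla_h\Inc u)-F(\nabla E_h\Inc u)\rVert_{L^2(\Omega)}^2$. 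For the consistency part the elementwise $\phi$-average estimate of Section~\ref{sec:ConceptOfDist} yields $\lVert F(\nabla u)-F(\langle\nabla u\rangle_T)\rVert_{L^2(T)}^2\lesssim\inf_{c\in\mathbb{R}^d}\lVert F(\nabla u)-c\rVert_{L^2(T)}^2$, so summation over $T\in\tria$ gives at most $\min_{\chi_h\in\mathbb{P}_0(\tria;\mathbb{R}^d)}\lVert F(\nabla u)-\chi_h\rVert_{L^2(\Omega)}^2$; this is the same step that produces Theorem~\ref{thm:Apriori} from Theorem~\ref{thm:mainResult}. For the nonconformity part the approximation properties of $E_h$, in their shifted-N-function form, bound the error by a weighted sum over interior faces of the jumps of $\Inc u$, and since these jumps only see the tangential part of $\llbracket\Pi_0\nabla u\rrbracket=\llbracket\langle\nabla u\rangle\rrbracket$, one is left with a sum of scaled tangential jumps of the piecewise best approximation of $\nabla u$.

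This tangential-jump term is the main obstacle: with no extra regularity on $u$ it cannot be bounded by elementwise oscillations in an elementary way. I would handle it by extending the a posteriori efficiency estimates for tangential (curl-type) jumps of~\cite{CarstensenHu07,CarstensenPuttkammer20} to the nonlinear problem, as carried out in Section~\ref{sec:MediusAnalysis}: because $\nabla u$ is a gradient its discrete curl vanishes, and testing~\eqref{eq:pLaplaPrb} with suitable face-supported functions rewrites the tangential jump through $\nabla u$ and $f$, after which the shifted-N-function calculus absorbs it into $\min_{\chi_h\in\mathbb{P}_0(\tria;\mathbb{R}^d)}\lVert F(\nabla u)-\chi_h\rVert_{L^2(\Omega)}^2+\osc^2(u,f;\tria)$, with constants that stay uniform in the ellipticity of $\phi$ thanks to its $\Delta_2$-property. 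Combining the consistency and nonconformity bounds with the C\'ea-type estimate of the first paragraph yields the claim; everything apart from the uniform tangential-jump estimate is N-function calculus already assembled in Sections~\ref{sec:ConceptOfDist} and~\ref{sec:MediusAnalysis}, which is why the theorem drops out as a byproduct of the medius analysis.
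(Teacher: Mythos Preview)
Your plan is essentially the paper's proof: quasi-optimality of the conforming scheme (C\'ea via Proposition~\ref{prop:Distances}), the competitor $v_h^c=\cE\Inc u=\mathcal{I}u$, the triangle-inequality split into a consistency part $\lVert F(\nabla u)-F(\Pi_0\nabla u)\rVert_{L^2(\Omega)}^2$ (handled by \eqref{eq:approxInc}) and a nonconformity part $\lVert F(\nabla_h\Inc u)-F(\nabla\cE\Inc u)\rVert_{L^2(\Omega)}^2$ (handled by Lemma~\ref{lem:confComp} with $v=u$, so that $\divergence A(\nabla u)=-f$ and Lemma~\ref{lem:Oscillation} reduces the residual term to $\osc^2$).

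One clarification on the mechanism: the tangential jump of $\nabla_h\Inc u=\Pi_0\nabla u$ is \emph{not} the obstacle and does not require the PDE. The curl-bubble argument you cite already gives $|\llbracket\gamma_\tau\Pi_0\nabla u\rrbracket_\gamma|\lesssim\dashint_{\omega_\gamma}|\nabla u-\Pi_0\nabla u|\dx$ with no reference to $f$. The reason $\osc^2(u,f;\tria)$ appears is the shift: to turn the face bound \eqref{eq:Enrichment} into the usable volume bound \eqref{eq:Enrichment2} one must shift-change from $\phi_{|\nabla v_h(T)|}$ to $\phi_{|\nabla v_h(T_\pm)|}$ across the patch, and this shift-change (cf.~\eqref{eq:Proofaaassskkkkk-}) introduces the \emph{full} jump $|\llbracket\nabla_h\Inc u\rrbracket_\gamma|$. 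Only then does Lemma~\ref{lem:TwoCases} split into the tangential case (harmless) and the normal-$A$ case, and the latter is where testing the PDE with a face bubble brings in $f$. The paper's remark after Theorem~\ref{thm:ApproxProperties} makes exactly this point: in the linear case no shift-change is needed and the extra term would be absent.
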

The a priori error estimates in Theorem~\ref{thm:Apriori} and \ref{thm:AprioriConf} suggest similar approximation properties of the conforming and non-conforming FEM, which we study numerically in Section~\ref{sec:NumExp} by applying state-of-the-art adaptive iterative schemes. 
The corresponding code can be found in \cite{StornCode}.
Notice that all results remain valid in the vector-valued setting with $V = W^{1,1}_0(\Omega;\mathbb{R}^d)$. Moreover, if $V = W^{1,p}_0(\Omega)$, the assumption $f\in L^\infty(\Omega)$ can be relaxed to $f\in L^s(\Omega)$ for suitable $s\geq 1$. 

\begin{remark}[Comparison to existing estimates]
A first result on the a priori error control for non-conforming FEMs for the $p$-Laplacian goes back to Liu and Yan \cite[Thm.~4.1]{LiuYan01}. They show under the regularity assumption $u \in C^{1,\alpha}(\overline{\Omega})$ that
\begin{align*}
\lVert F(\nabla u)-F(\nabla_h u_h)\rVert_{L^2(\Omega)}^2& \lesssim \min_{v_h\in V_h} \lVert F(\nabla u)-F(\nabla_h v_h)\rVert_{L^2(\Omega)}^2 \\
&\quad + \sum_{T \in \tria} \int_{\partial T} h_T\, |F(\nabla u) - F(\nabla v_h|_T)|^2 \ds. 
\end{align*} 
The regularity assumption holds for $p$-harmonic functions \cite{Uralceva68,Evans82,BalciBehnDieningStorn25}, but fails for rough right-hand sides $f$ and re-entrant corners in $\partial \Omega$.
An alternative estimate can be found in \cite[Thm.~3]{LiuChen20}, stating with the $P_3$-conforming companion $u_3$ of $u_h$, see \cite{CarstensenLiu15}, that
\begin{align*}
&\lVert F(\nabla u)-F(\nabla_h u_h)\rVert_{L^2(\Omega)}^2 \lesssim \lVert h_\tria(f-\Pi_0 f)\rVert_{L^2(\Omega)} \lVert u \rVert_{L^p(\Omega)}\\
&\qquad + \lVert h_\tria(f-\Pi_0 f)\rVert_{L^2(\Omega)} \lVert \nabla_h (u-u_h) \rVert_{L^p(\Omega)} + \lVert F(\nabla u_h)-F(\nabla u_3)\rVert^2_{L^2(\Omega)}.
\end{align*}
This estimate does in general not provide the optimal rate of convergence.
A more recent approach by Kaltenbach \cite{Kaltenbach24} uses medius analysis and the a posteriori error analysis in \cite{DieningKreuzer08} for the conforming space $S_0^1(\tria)$. It leads to a weaker version of Theorem~\ref{thm:mainResult}, namely
\begin{align}\label{eq:estKalt}
\begin{aligned}
 \lVert F(\nabla u) - F(\nabla_h u_h) \rVert_{L^2(\Omega)}^2 &\lesssim \min_{v_h^c \in S^1_0(\tria)} \lVert F(\nabla u) - F(\nabla v_h^c) \rVert_{L^2(\Omega)}^2\\
 &\quad + \osc^2(u,f;\tria). 
\end{aligned}
\end{align}
Notice that the combination of our novel a priori error estimate in Theorem~\ref{thm:AprioriConf} and this result implies Theorem~\ref{thm:mainResult}. A downside of the analysis in \cite{Kaltenbach24} is its direct use of the results in \cite{DieningKreuzer08} without a proper treatment of tangential jumps. In contrast, the proofs in Section~\ref{sec:MediusAnalysis} consider tangential jumps, paving the way for further investigations such as efficiency and reliability of residual-based a posteriori error estimators for non-conforming methods.
\end{remark}

\section{Concept of distance}\label{sec:ConceptOfDist}
Before embarking on the a priori error analysis, we introduce a distance measure tailored to the energy in \eqref{eq:MinProb}. For the $p$-Laplacian such a distance was introduced by Barrett and Liu in \cite{BarrettLiu94} and then extended and generalized in a series of publications by Diening and co-authors \cite{DieningEttwein08,DieningKreuzer08,DieningFornasierTomasiWank20}.
One motivation behind this concept is that conforming methods compute discrete minimizers $u_h^c \in V_h^c\subset V$ of the given energy $\mathcal{J}$. Thus, with a norm $\normm{\bigcdot}$ that is equivalent to the energy distance in the sense that  $\normm{ u - v }^2 \eqsim \mathcal{J}(v) - \mathcal{J}(u)$ for all $v\in V$, the numerical scheme will automatically be quasi-optimal; that is,
\begin{align*}
\normm{ u - u_h^c}^2 \lesssim \cJ(u_h^c) - \cJ(u) \leq \cJ(v_h^c) - \cJ(u) \lesssim \normm{ u - v_h^c }^2 \quad\text{for all }v_h^c\in V_h^c.
\end{align*}
However, for general integrands like the $p$-Laplacian such a norm is often not known. Instead, one can generalize the notion of distance, leading to quasi-norms. To define this notion, we introduce  uniformly convex N-functions.
\begin{definition}[N-function]\label{eq:defNfunction}
A function $\varphi\colon \mathbb{R}_{\geq 0} \to \mathbb{R}_{\geq 0}$ is an N-function, if
\begin{enumerate}
\item $\varphi$ is continuous and convex,
\item there is a right-continuous and non-decreasing function $\varphi'\colon \mathbb{R}_{\geq 0} \to \mathbb{R}_{\geq 0}$ with $\varphi(t) = \int_0^t \varphi'(s)\ds$ that satisfies $\lim_{t\to \infty} \varphi'(t) = \infty$ as well as $\varphi'(0) = 0$ and $\varphi'(t) >0$ for all $t>0$.
\end{enumerate}
An N-function $\phi$ is called uniformly convex if there exist constants $0<c_{uc} \leq C_{uc} < \infty$ such that for all $s> 0$ and $t\in [0,s)$
\begin{align*}
c_{uc} \frac{\phi'(s) - \phi'(t)}{s-t} \leq \frac{\phi'(s)}{s} \leq  C_{uc} \frac{\phi'(s) - \phi'(t)}{s-t}.
\end{align*}
\end{definition}
Let $\phi$ be a uniformly convex N-function. Throughout this paper, we use generic constants $C < \infty$ which may vary from line to line. They  depend on the constants $c_{uc},C_{uc}$ as well as the shape regularity of $\tria$ but are otherwise independent of the underlying triangulation and $\phi$. We use the notation $\eqsim$ and $\lesssim$ to hide the constants $C$ in the sense that $a \leq C b$ is denoted by $a \lesssim b$. If $C$ depends additionally on a parameter $\delta$, we denote this by a subscript $C_\delta$.

Apart from N-functions $\phi$, we will frequently make use of shifted N-functions, cf.~\cite[Eq.~B.4]{DieningFornasierTomasiWank20}, defined for all $r,t\geq 0$ by
\begin{align}\label{eq:ShiftedNfunction}
\phi_r(t) \coloneqq \int_0^t \phi_r'(s) \ds\qquad\text{with }\phi_r'(s) \coloneqq \frac{\phi'(\max\lbrace r,s\rbrace)}{\max\lbrace r,s\rbrace}s\qquad\text{for all } s \geq 0.
\end{align}
Moreover, we define  for all $Q\in \mathbb{R}^d$ the quantities
\begin{align*}
F(Q) \coloneqq 
\begin{cases}
\sqrt{\frac{\phi'(|Q|)}{ |Q|}}Q  &\text{for }Q \neq 0,\\
 0 &\text{for }Q=0
\end{cases}\quad\text{and}\quad A(Q) \coloneqq 
\begin{cases}
\frac{\phi'(|Q|)}{ |Q|}Q  &\text{for }Q \neq 0,\\
 0 &\text{for }Q=0.
\end{cases}
\end{align*}
\begin{proposition}[Concept of distance]\label{prop:Distances}
One has for all $P,Q\in \mathbb{R}^d$ the equivalences
\begin{align*}
|F(P) - F(Q)|^2 \eqsim (A(P) - A(Q)) \cdot (P-Q) \eqsim \phi_{|Q|}(|P-Q|).
\end{align*}
Moreover, the minimizer $u\in W^{1,1}_0(\Omega)$ to \eqref{eq:MinProb} and any $v\in W^{1,1}_0(\Omega)$ satisfy
\begin{align*}
 \mathcal{J}(v) - \mathcal{J}(u)  \eqsim \lVert F(\nabla u ) - F(\nabla v)\rVert_{L^2(\Omega)}^2.
\end{align*}
\end{proposition}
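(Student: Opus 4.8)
The plan is to establish the three pointwise equivalences first, pointwise on $\mathbb{R}^d$, and then to deduce the integrated energy identity by testing the Euler--Lagrange equation \eqref{eq:pLaplaPrb} with $v-u$. Throughout I abbreviate $a(t)\coloneqq \phi'(t)/t$ for $t>0$, so that $A(Q)=a(|Q|)Q$, $F(Q)=\sqrt{a(|Q|)}\,Q$ and $|F(Q)|^2=\phi'(|Q|)\,|Q|$, and I write $\Phi(Q)\coloneqq\phi(|Q|)$, a convex function on $\mathbb{R}^d$ with $D\Phi=A$ (using $\phi'(0)=0$). The first step is to record the consequences of uniform convexity: the two-sided difference-quotient bound in Definition~\ref{eq:defNfunction} says precisely that $\phi''(t)\eqsim\phi'(t)/t$ (with $\phi''$ the right derivative of the non-decreasing $\phi'$), whence $\phi$ and $\phi'$ satisfy $\Delta_2$ and $\nabla_2$ with constants depending only on $c_{uc},C_{uc}$, and the shifted N-functions $\phi_r$ from \eqref{eq:ShiftedNfunction} satisfy $\Delta_2$ \emph{uniformly in the shift} $r\ge0$ together with $\phi_r(t)\eqsim\phi'(\max\{r,t\})\,t^2/\max\{r,t\}$. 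These are the standard N-function calculus lemmas from \cite{DieningEttwein08,DieningKreuzer08,DieningFornasierTomasiWank20}, which I would quote or re-derive.

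For $(A(P)-A(Q))\cdot(P-Q)\eqsim\phi_{|Q|}(|P-Q|)$ I use the fundamental theorem of calculus along the segment $\xi_t\coloneqq Q+t(P-Q)$: since $DA(\xi)$ acts as the scalar $a(|\xi|)$ on $\xi^\perp$ and as $\phi''(|\xi|)$ on $\mathbb{R}\xi$, uniform convexity gives $(P-Q)^\top DA(\xi_t)(P-Q)\eqsim a(|\xi_t|)\,|P-Q|^2$, so that $(A(P)-A(Q))\cdot(P-Q)\eqsim |P-Q|^2\int_0^1 a(|\xi_t|)\,\dt$; the remaining scalar claim $\int_0^1 a(|\xi_t|)\,\dt\eqsim \phi_{|Q|}(|P-Q|)/|P-Q|^2$ follows by distinguishing $|P-Q|\lesssim|Q|$ (where $|\xi_t|\eqsim|Q|$ throughout) from $|P-Q|\gtrsim|Q|$, using monotonicity of $\phi'$ and the uniform $\Delta_2$ bound. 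For $|F(P)-F(Q)|^2\eqsim(A(P)-A(Q))\cdot(P-Q)$ I would expand both sides to obtain the algebraic identity
\begin{align*}
|F(P)-F(Q)|^2-(A(P)-A(Q))\cdot(P-Q)=\big(\sqrt{a(|P|)}-\sqrt{a(|Q|)}\big)^2\,(P\cdot Q),
\end{align*}
and then split on the sign of $P\cdot Q$. If $P\cdot Q\le0$, the identity already gives $|F(P)-F(Q)|^2\le(A(P)-A(Q))\cdot(P-Q)$, while the monotonicity of $\phi'$ yields $(a(|P|)+a(|Q|))|P\cdot Q|\le 2(\phi'(|P|)|P|+\phi'(|Q|)|Q|)=2(|F(P)|^2+|F(Q)|^2)$, so both quantities are $\eqsim|F(P)|^2+|F(Q)|^2$. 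If $P\cdot Q>0$, the identity gives $|F(P)-F(Q)|^2\ge(A(P)-A(Q))\cdot(P-Q)\ge0$, and the converse reduces (by homogeneity and the almost-radial structure) to the classical scalar equivalence $(\sqrt{a(s)}\,s-\sqrt{a(r)}\,r)^2\eqsim(s-r)(\phi'(s)-\phi'(r))$, which again follows from $\phi''(t)\eqsim\phi'(t)/t$.

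For the global statement I test \eqref{eq:pLaplaPrb} with $v-u\in W^{1,p}_0(\Omega)$ — admissible since $A(\nabla u)\in L^{p'}(\Omega)$ and $f\in L^\infty(\Omega)\subset L^{p'}(\Omega)$ — to rewrite the energy difference as the integral of the Bregman divergence of $\Phi$,
\begin{align*}
\cJ(v)-\cJ(u)=\int_\Omega\big(\Phi(\nabla v)-\Phi(\nabla u)-A(\nabla u)\cdot(\nabla v-\nabla u)\big)\dx,
\end{align*}
using $D\Phi=A$. Expanding the integrand by the fundamental theorem of calculus along $\nabla u+t(\nabla v-\nabla u)$ and using $\xi_t-\nabla u=t(\nabla v-\nabla u)$ turns it into $\int_0^1 t^{-1}\big(A(\nabla u+t(\nabla v-\nabla u))-A(\nabla u)\big)\cdot(\xi_t-\nabla u)\,\dt$; by the pointwise equivalence (with base point $\nabla u$) the integrand is $\eqsim t^{-1}\phi_{|\nabla u|}(t|\nabla v-\nabla u|)$. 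The bound $\phi_r(ts)\le t\phi_r(s)$ for $t\in[0,1]$ (convexity and $\phi_r(0)=0$) gives $\int_0^1 t^{-1}\phi_r(ts)\,\dt\le\phi_r(s)$, and restricting to $t\in[\tfrac12,1]$ and invoking the uniform $\Delta_2$ property of $\phi_r$ gives the matching lower bound, so this $t$-integral is $\eqsim\phi_{|\nabla u|}(|\nabla v-\nabla u|)$. A final application of the pointwise equivalence, $\phi_{|\nabla u|}(|\nabla v-\nabla u|)\eqsim|F(\nabla u)-F(\nabla v)|^2$, and integration over $\Omega$ (all integrands being non-negative, Tonelli justifies the interchange) yield $\cJ(v)-\cJ(u)\eqsim\lVert F(\nabla u)-F(\nabla v)\rVert_{L^2(\Omega)}^2$.

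The main obstacle is the pointwise equivalence $(A(P)-A(Q))\cdot(P-Q)\eqsim\phi_{|Q|}(|P-Q|)$: this is where the merely one-sided regularity of $\phi'$, the non-monotonicity of $a(t)=\phi'(t)/t$, and the need for $\Delta_2$-type control of the shifted N-functions \emph{uniformly in the shift} all enter, and where I would lean most heavily on — or carefully re-derive — the N-function lemmas of \cite{DieningEttwein08,DieningKreuzer08,DieningFornasierTomasiWank20}. Once those scalar inequalities are in hand, everything else (the algebraic identity, the Bregman reformulation, the two Taylor/FTC expansions, and the $t$-integration) is routine.
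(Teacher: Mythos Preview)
The paper does not prove this proposition at all: its ``proof'' is a one-line citation of Lemmas~41 and~42 in \cite{DieningFornasierTomasiWank20}. Your proposal goes well beyond that by actually sketching the argument, and the sketch is correct and is essentially the strategy carried out in the cited references: the segment-integral representation $(A(P)-A(Q))\cdot(P-Q)=\int_0^1(P-Q)^\top DA(\xi_t)(P-Q)\,\dt$ combined with $DA(\xi)\eqsim a(|\xi|)\,\mathrm{Id}$ from uniform convexity; the algebraic identity linking $|F(P)-F(Q)|^2$ and $(A(P)-A(Q))\cdot(P-Q)$; and the Bregman-divergence rewriting of $\cJ(v)-\cJ(u)$ followed by the $t$-integration trick. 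The only step you leave terse is the $P\cdot Q>0$ reduction, but it does go through: bounding $P\cdot Q\le|P||Q|$ reduces the extra term to quantities depending only on $|P|,|Q|$, and then the scalar equivalence together with $||P|-|Q||\le|P-Q|$ and monotonicity of $\phi_{|Q|}$ closes the estimate. So your approach is the one the paper defers to; what it buys is a self-contained treatment at the cost of reproducing several pages of N-function calculus that the paper chose to quote.
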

\begin{proof}
This proposition summarizes Lemma 41 and Lemma 42 in \cite{DieningFornasierTomasiWank20}.
\end{proof}
\begin{remark}[Quasi-norm of Barrett and Liu]
Since  for all $P,Q\in \mathbb{R}^d$ one has $\max\lbrace|P-Q| , |Q|\rbrace \eqsim |P-Q| + |Q|$, the $\Delta_2$ property of $\phi$ (and its shifted version) stated in Proposition~\ref{prop:Properties}~\ref{itm:Young} below implies
\begin{align*}
\phi'_{|Q|}(|P-Q|)  \coloneqq \frac{\phi'(\max\lbrace|P-Q| , |Q|\rbrace) }{\max\lbrace|P-Q| , |Q|\rbrace} |P-Q| \eqsim \frac{\phi'(|P-Q| + |Q|)}{|P-Q| + |Q|} |P-Q|.
\end{align*}
Combining this with the property $\varphi_{|Q|}'(t)\, t \eqsim \varphi_{|Q|}(t)$ for all $t\geq 0$ shown in Proposition~\ref{prop:Properties}~\ref{itm:New} below leads to
\begin{align}\label{eq:equiNorm}
\phi_{|Q|}(|P-Q|) \eqsim \frac{\phi'(|P-Q| + |Q|)}{|P-Q| + |Q|} |P-Q|^2.  
\end{align}
This relation shows the equivalence of the notion of distance in Proposition~\ref{prop:Distances} and the classical quasi-norm introduced by Barrett and Liu in \cite{BarrettLiu94} for $\phi(t) = t^p/p$.
\end{remark}
We conclude this section by displaying a toolbox for working with uniformly convex N-functions.
\begin{proposition}[Properties of N-functions]\label{prop:Properties}
A uniformly convex N-function $\phi$ has the following properties.
\begin{enumerate}
\item Its convex conjugate $\phi^*(t) \coloneqq \max_{r\geq 0} \big(rt - \phi(r)\big)$ for all $t\geq 0$ is a uniformly convex N-function. Both $\phi$ and $\phi^*$ satisfy the $\Delta_2$ condition, which reads for constants $\Delta_2 < \infty$ and $\nabla_2 < \infty$ depending solely on the uniform convexity  
\begin{align*}
\phi(2 t) \leq \Delta_2 \, \phi(t)\quad\text{and}\quad\phi^*(2 t) \leq \nabla_2 \, \phi^*(t)\qquad\text{for all }t\geq 0.
\end{align*}
Moreover, Young's inequality holds, stating for any $\delta > 0$ the existence of a constant $C_\delta < \infty$ such that \label{itm:Young}
\begin{align*}
st \leq \delta \,\phi(s) + C_\delta \phi^*(t)\qquad\text{for all }s,t \geq 0.
\end{align*}
\item One has the triangle-type inequality \label{itm:triangleInequ}
 \begin{align*}
 \phi(|a+b|) \leq \Delta_2 \big(\phi(|a|) + \phi(|b|)\big) \qquad\text{for all }a,b\in \mathbb{R}.
 \end{align*}
\item The shifted N-functions $\phi_r$ are uniformly convex N-functions with convexity constants independent of the shift $r \geq 0$.  \label{itm:ShitedN}
\item  For all $Q,P\in \mathbb{R}^d$ one has \label{itm:Udal} 
\begin{align*}
&(\phi_{|Q|})^*(|A(Q) - A(P)|) \eqsim \phi_{|Q|}(|Q- P|) \\
&\quad \eqsim |F(Q) - F(P)|^2 \eqsim (A(Q)- A(P))\cdot (Q-P).
\end{align*}
\item For all $Q,P\in \mathbb{R}^d$ one has \label{itm:Udal2}
\begin{align*}
\phi_{|Q|}'(|Q-P|) \eqsim |A(Q) - A(P)|.
\end{align*}
\item For all $t\geq 0$ one has $\phi'(t)t \eqsim \phi(t)$. \label{itm:New}
\item One has the shift-change estimate, stating for any $\delta>0$ the existence of a constant $C_\delta < \infty$ such that for all $P,Q\in \mathbb{R}^d$ and $t \geq 0$ \label{itm:shiftChange}
\begin{align*}
\phi_{|P|}(t) &\leq (1+C_\delta) \phi_{|Q|}(t) + \delta\, |F(Q)  - F(P)|^2,\\
(\phi_{|P|})^*(t) &\leq (1+C_\delta) (\phi_{|Q|})^*(t) + \delta\, |F(Q)  - F(P)|^2.
\end{align*}
\item For any $v\in W^{1,1}(T)$ with $T\in \tria$ one has the \Poincare-type inequality \label{itm:Poincare}
\begin{align*}
\min_{v_T \in \mathbb{R}}\int_T \phi(|v -v_T|)\dx \eqsim \int_T \phi(|v - \Pi_0 v|)\dx  \lesssim \int_T \phi(h_T\,|\nabla v|)\dx. 
\end{align*}
\end{enumerate}
\end{proposition}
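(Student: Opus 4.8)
The plan is to establish the eight items in turn, most of which I would either collect from the calculus of (shifted) N\nobreakdash-functions in \cite{DieningEttwein08,DieningKreuzer08,DieningFornasierTomasiWank20} or deduce in one line from the $\Delta_2$ condition and the monotonicity of $\phi'$; the only item requiring a genuinely new computation in the present setting is the \Poincare-type inequality~\ref{itm:Poincare}. Concretely, items~\ref{itm:Young}--\ref{itm:ShitedN}, \ref{itm:Udal2} and~\ref{itm:shiftChange} are standard: uniform convexity forces $\phi,\phi^*\in\Delta_2$ and hence Young's inequality with $\varepsilon$; the shifted functions $\phi_r$ inherit uniform convexity with shift-independent constants; and $\phi'_{|Q|}(|Q-P|)\eqsim|A(Q)-A(P)|$ together with the shift-change estimates are in \cite[App.~B]{DieningFornasierTomasiWank20}. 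For item~\ref{itm:Udal}, the last three quantities are precisely Proposition~\ref{prop:Distances}; the extra equivalence $(\phi_{|Q|})^*(|A(Q)-A(P)|)\eqsim\phi_{|Q|}(|Q-P|)$ then follows by feeding item~\ref{itm:Udal2} into the identity $\varphi^*(\varphi'(t))\eqsim\varphi(t)$, which holds for every uniformly convex N\nobreakdash-function $\varphi$ (apply it to $\varphi=\phi_{|Q|}$, admissible by item~\ref{itm:ShitedN}) and which itself comes from the equality case $\varphi^*(\varphi'(t))=t\varphi'(t)-\varphi(t)$ of Young's inequality, item~\ref{itm:New}, and the one-line uniform-convexity bound $t\varphi'(t)-\varphi(t)=\int_0^t(\varphi'(t)-\varphi'(s))\,\mathrm{d}s\gtrsim\varphi(t)$.

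The two elementary items I would verify directly. For item~\ref{itm:triangleInequ}, monotonicity of $\phi$ gives $\phi(|a+b|)\le\phi(|a|+|b|)\le\phi\big(2\max\{|a|,|b|\}\big)\le\Delta_2\,\phi\big(\max\{|a|,|b|\}\big)\le\Delta_2\big(\phi(|a|)+\phi(|b|)\big)$. For item~\ref{itm:New}, monotonicity of $\phi'$ gives $\phi(t)=\int_0^t\phi'(s)\,\mathrm{d}s\le t\,\phi'(t)$, while $t\,\phi'(t)\le\int_t^{2t}\phi'(s)\,\mathrm{d}s\le\phi(2t)\le\Delta_2\,\phi(t)$; hence $\phi(t)\le\phi'(t)\,t\le\Delta_2\,\phi(t)$.

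For item~\ref{itm:Poincare} the equivalence $\min_{v_T}\int_T\phi(|v-v_T|)\eqsim\int_T\phi(|v-\Pi_0 v|)$ is cheap: ``$\le$'' is trivial, and for ``$\ge$'', if $v_T^\ast\in\mathbb{R}$ realizes the minimum then item~\ref{itm:triangleInequ} and Jensen's inequality for the integral mean (which yields $|T|\,\phi(|\Pi_0 v-v_T^\ast|)\le\int_T\phi(|v-v_T^\ast|)\,\mathrm{d}x$ since $\Pi_0 v-v_T^\ast=\Pi_0(v-v_T^\ast)$) give $\int_T\phi(|v-\Pi_0 v|)\le\Delta_2\big(\int_T\phi(|v-v_T^\ast|)+|T|\,\phi(|\Pi_0 v-v_T^\ast|)\big)\le 2\Delta_2\int_T\phi(|v-v_T^\ast|)$. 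For the remaining inequality $\int_T\phi(|v-\Pi_0 v|)\lesssim\int_T\phi(h_T|\nabla v|)$ I would transport $v$ to a reference simplex $\hat T$ through the affine bijection $\Phi(\hat x)=B_T\hat x+b_T$; shape regularity yields $\lVert B_T\rVert\lesssim h_T$ and $\lVert B_T^{-1}\rVert\lesssim h_T^{-1}$, so $\hat v\coloneqq v\circ\Phi$ satisfies $|\nabla\hat v|\eqsim h_T\,|(\nabla v)\circ\Phi|$, the integral mean is invariant under the change of variables, and the common Jacobian $|\det B_T|$ factors out of both sides. Everything then reduces to the scale-free modular \Poincare inequality on the fixed simplex, $\int_{\hat T}\phi(|\hat v-\Pi_0\hat v|)\,\mathrm{d}\hat x\lesssim\int_{\hat T}\phi(|\nabla\hat v|)\,\mathrm{d}\hat x$ with constant depending only on $d$, $\hat T$ and $\Delta_2$; transporting back and absorbing the shape-regularity factor in $|\nabla\hat v|\le C\,h_T|(\nabla v)\circ\Phi|$ via $\Delta_2$ once more concludes item~\ref{itm:Poincare}.

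The scale-free estimate is the step I expect to be the main obstacle. Here I would start from the classical \Poincare representation on the convex set $\hat T$, $|\hat v(\hat x)-\Pi_0\hat v|\le c(d,\hat T)\int_{\hat T}|\hat x-\hat y|^{1-d}\,|\nabla\hat v(\hat y)|\,\mathrm{d}\hat y$, observe that $\sup_{\hat x\in\hat T}\int_{\hat T}|\hat x-\hat y|^{1-d}\,\mathrm{d}\hat y\le c_0<\infty$ while (because $\hat T$ is a simplex) $\inf_{\hat x\in\hat T}\int_{\hat T}|\hat x-\hat y|^{1-d}\,\mathrm{d}\hat y\ge c_1>0$, apply Jensen's inequality to the probability measure $\mathrm{d}\mu_{\hat x}\propto|\hat x-\hat y|^{1-d}\,\mathrm{d}\hat y$, and invoke the $\Delta_2$-homogenization bound $\phi(\lambda t)\le\Delta_2\,\lambda^{\log_2\Delta_2}\phi(t)$ for $\lambda\ge1$ to obtain $\phi(|\hat v(\hat x)-\Pi_0\hat v|)\lesssim\int_{\hat T}|\hat x-\hat y|^{1-d}\,\phi(|\nabla\hat v(\hat y)|)\,\mathrm{d}\hat y$; integrating in $\hat x$ and using Fubini together with $\sup_{\hat y}\int_{\hat T}|\hat x-\hat y|^{1-d}\,\mathrm{d}\hat x\le c_0$ finishes it. What makes this delicate is that the constant must remain independent of $\phi$ beyond $\Delta_2$ (hence depend only on $c_{uc},C_{uc}$ and the shape regularity), which rules out any $\phi$-dependent interpolation or duality argument and makes the homogenization inequality $\phi(\lambda t)\lesssim\lambda^{\log_2\Delta_2}\phi(t)$ the crucial device turning the pointwise Riesz-potential bound into a modular one; the remainder is routine Jensen/Fubini bookkeeping.
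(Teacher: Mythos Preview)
Your proposal is correct and, for items~\ref{itm:Young}--\ref{itm:shiftChange}, essentially coincides with the paper's proof: both treatments cite the same literature \cite{DieningEttwein08,DieningKreuzer08,DieningFornasierTomasiWank20} for the structural facts and supply the one-line arguments for~\ref{itm:triangleInequ} via $|a+b|\le 2\max\{|a|,|b|\}$ and $\Delta_2$, and for the equivalence in~\ref{itm:Poincare} via the triangle inequality plus Jensen on $\Pi_0$. Your direct verifications of~\ref{itm:New} (monotonicity of $\phi'$ and $\Delta_2$) and of the first equivalence in~\ref{itm:Udal} (feeding~\ref{itm:Udal2} into $\varphi^*(\varphi'(t))=t\varphi'(t)-\varphi(t)\eqsim\varphi(t)$, the lower bound coming from uniform convexity) are valid alternatives to the paper's citations of \cite[Eq.~2.6]{DieningKreuzer08} and \cite[Cor.~6]{DieningKreuzer08}.

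The one genuine divergence is the modular \Poincare{} inequality in~\ref{itm:Poincare}. The paper simply invokes \cite[Lem.~3.1]{DieningRuzicka07}, whereas you give a self-contained argument: pull back to a reference simplex, use the Riesz-potential representation $|\hat v(\hat x)-\Pi_0\hat v|\lesssim\int_{\hat T}|\hat x-\hat y|^{1-d}|\nabla\hat v(\hat y)|\,\mathrm{d}\hat y$, apply Jensen with respect to the probability measure $M(\hat x)^{-1}|\hat x-\hat y|^{1-d}\,\mathrm{d}\hat y$ (using that $M(\hat x)=\int_{\hat T}|\hat x-\hat y|^{1-d}\,\mathrm{d}\hat y$ is bounded above and below on $\hat T$), absorb the bounded prefactor by iterated $\Delta_2$, and conclude by Fubini. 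This is a clean and correct alternative whose constant depends only on $d$, the shape regularity, and $\Delta_2$ as required; note that since the prefactor $c\,M(\hat x)$ is uniformly bounded you only need $\Delta_2$ finitely many times, so the full homogenization bound $\phi(\lambda t)\lesssim\lambda^{\log_2\Delta_2}\phi(t)$ is not actually needed. The trade-off is that your route is more self-contained but longer, while the paper keeps the proposition as a toolbox summary and defers the analytic work to the cited reference.
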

\begin{proof}
The properties in \ref{itm:Young} are shown in \cite[Lem.~31]{DieningFornasierTomasiWank20} and \cite[Eq.~2.3]{DieningKreuzer08}.
The triangle inequality in \ref{itm:triangleInequ} follows for all $a,b\in \mathbb{R}$ from the inequality $|a+b| \leq 2 \max\lbrace |a|,|b|\rbrace$ and the $\Delta_2$ condition, implying 
\begin{align*}
\phi(|a + b|) \leq \phi(2|a|) + \phi(2|b|) \leq \Delta_2 \big( \phi(|a|) + \phi(|b|) \big).
\end{align*}
The property \ref{itm:ShitedN} is shown in \cite[Lem.~37]{DieningFornasierTomasiWank20},
\ref{itm:Udal} in \cite[Cor.~6]{DieningKreuzer08}, \ref{itm:Udal2} in \cite[Lem.~41]{DieningFornasierTomasiWank20}, and \ref{itm:New} in \cite[Eq.~2.6]{DieningKreuzer08}. The \Poincare-type inequality 
\begin{align*}
\min_{v_T \in \mathbb{R}}\int_T \phi(|v -v_T|)\dx  \lesssim \int_T \phi(h_T\,|\nabla v|)\dx. 
\end{align*}
is proven in \cite[Lem.~3.1]{DieningRuzicka07}. The shift-change \ref{itm:shiftChange} is shown in \cite[Cor.~44]{DieningFornasierTomasiWank20}. The equivalence in \ref{itm:Poincare} follows by the triangle inequality \ref{itm:triangleInequ} and Jensen's inequality in the sense for any $v_T \in \mathbb{R}$ 
\begin{align*}
\int_T \phi(|v - \Pi_0 v|)\dx & \lesssim \int_T \phi(|v - v_T|)\dx + \int_T \phi(|\Pi_0(v - v_T)|)\dx \\
& \leq 2\int_T \phi(|v - v_T|)\dx.\qedhere
\end{align*}
\end{proof}

\section{Companion operator}\label{sec:Companion}
Key tools in medius analysis are conforming companion operators, see for example~\cite[Prop.~2.3]{CarstensenGallistlSchedensack15}, mapping non-conforming functions onto conforming ones. 
For our purposes, the $\mathbb{P}_1$-conforming companion $\mathcal{E}$, also known as enriching operator \cite{Brenner96,Brenner15}, suffices. It maps the Crouzeix--Raviart space onto the lowest-order Lagrange space
$S^1_0(\tria)\coloneqq \mathbb{P}_1(\tria) \cap W^{1,\infty}_0(\Omega)$.
Let $\nodes(\Omega)$ denote the set of interior vertices in $\tria$. 
The operator $\mathcal{E}\colon V_h\to S^1_0(\tria)$ is defined via averaging the nodal values in the sense that for all $v_h \in V_h$ and interior vertices $j \in \nodes(\Omega)$ with vertex patch $\tria_j \coloneqq \lbrace T\in \tria\colon j \in T\rbrace$ we set
\begin{align}\label{eq:DefEnrich}
(\mathcal{E} v_h)(j) \coloneqq \frac{\max\lbrace v_h|_T(j) \colon T \in \tria_j\rbrace+\min\lbrace v_h|_T(j) \colon T \in \tria_j\rbrace}{2}.
\end{align}
To conclude approximation properties of $\mathcal{E}$, we recall the local mesh-size
$h_\tria\in \mathbb{P}_0(\tria)$ with $h_\tria|_T = h_T \coloneqq \textup{diam}(T)$ for all $T\in \tria$ and introduce the following notion of jumps. Given an interior face $\gamma \in \faces(\Omega)$, we fix the two simplices $T_+(\gamma) = T_+$ and $T_-(\gamma) = T_-$ in $\tria$ such that $\gamma = T_+\cap T_-$. Let $\nu\in \mathbb{R}^d$ denote the unit normal vector pointing from $T_+$ to $T_-$. Given a function $v_h \in \mathbb{P}_1(\tria)$, we define $v_h(T_\pm) \coloneqq v_h|_{T_\pm}$ and set the jump
\begin{align}\label{eq:defJump}
\llbracket v_h \rrbracket_\gamma \coloneqq v_h(T_+)|_\gamma - v_h(T_-)|_\gamma.
\end{align}
For boundary faces $\gamma \in \faces(\partial\Omega)$ let $T_+(\gamma)\in \tria$ be the simplex with $\gamma \subset T$ and set
\begin{align}\label{eq:defJump2}
\llbracket v_h \rrbracket_\gamma \coloneqq v_h(T_+(\gamma))|_\gamma  = v_h|_\gamma.
\end{align}%
The definition extends the vector-valued functions by applying it component-wise. We set the tangential trace via orthogonal projection onto the tangent space, i.e.,
\begin{align*}
  \gamma_\tau Q \coloneqq Q - (Q \cdot \nu)\,\nu = (I - \nu \otimes \nu)\,Q\qquad\text{for }Q\in \mathbb{R}^d,
\end{align*}
This is the orthogonal projection of $Q$ onto the hyperplane orthogonal to $\nu$. 
In particular, for $d=2$ and the unit tangent $\tau = (-\nu_2,\nu_1)^\top$ we have 
$\gamma_\tau Q = (Q \cdot \tau)\,\tau$, and for $d=3$ we obtain the equivalent 
representation $\gamma_\tau Q = \nu \times (Q \times \nu)$.
The trace operator allows us to decompose the jump of piecewise constant vector-valued functions $G_h \in \mathbb{P}_0(\tria;\mathbb{R}^d)$ into normal and tangential components in the sense that
\begin{align*}
|\llbracket G_h \rrbracket_\gamma |^2 = |\llbracket G_h \cdot \nu \rrbracket_\gamma|^2 + |\llbracket \gamma_\tau G_h \rrbracket_\gamma|^2.
\end{align*}
A major difficulty in the analysis of the non-conforming FEM is the occurrence of tangential jumps  $\llbracket \gamma_\tau \nabla_h v_h\rrbracket_\gamma$ with $v_h \in V_h$. The following lemma shows that the tangential jump controls the total jump $|\llbracket \nabla_h v_h\rrbracket_\gamma|$ or the normal jump of $\llbracket A(\nabla_h v_h) \cdot \nu \rrbracket_\gamma$  controls  the jump $|\llbracket A(\nabla_h u_h)\rrbracket_\gamma|$.
\begin{lemma}[Two cases]\label{lem:TwoCases}
For any $\gamma \in \faces(\Omega)$ and $v_h \in V_h$ at least one of the following two estimates holds:
\begin{align*}
|\llbracket \nabla_h v_h \rrbracket_\gamma| \eqsim |\llbracket \gamma_\tau \nabla_h v_h \rrbracket_\gamma|\qquad\text{or}\qquad |\llbracket A(\nabla_h v_h) \rrbracket_\gamma| \eqsim |\llbracket A(\nabla_h v_h)\cdot\nu \rrbracket_\gamma|.
\end{align*}
\end{lemma}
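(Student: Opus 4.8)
The plan is to exploit the specific structure of Crouzeix--Raviart functions on an interior face $\gamma = T_+ \cap T_-$: the jump $\llbracket v_h \rrbracket_\gamma$ is an affine function on $\gamma$ that vanishes at $\textup{mid}(\gamma)$, hence it is a \emph{linear} function in the tangential variables with no constant part. Consequently the jump of the (piecewise constant) gradient $\llbracket \nabla_h v_h \rrbracket_\gamma$ has a normal component that is completely determined by the tangential gradient of this linear function along $\gamma$, and in particular is controlled, up to a shape-regularity constant, by the tangential jump $\llbracket \gamma_\tau \nabla_h v_h \rrbracket_\gamma$. I would make this precise first: writing $G_\pm \coloneqq \nabla_h v_h|_{T_\pm} \in \mathbb{R}^d$, the affine trace identity $\llbracket v_h \rrbracket_\gamma(x) = (G_+ - G_-)\cdot(x - \textup{mid}(\gamma))$ on $\gamma$ shows that $\llbracket \nabla_h v_h\rrbracket_\gamma \cdot \nu$ is a bounded function of the surface gradient of $\llbracket v_h\rrbracket_\gamma$, which equals the tangential part $\llbracket \gamma_\tau \nabla_h v_h\rrbracket_\gamma$. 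Therefore one always has
\begin{align}\label{eq:normalByTangential}
|\llbracket \nabla_h v_h\rrbracket_\gamma \cdot \nu| \lesssim |\llbracket \gamma_\tau \nabla_h v_h\rrbracket_\gamma|,
\end{align}
so that $|\llbracket \nabla_h v_h\rrbracket_\gamma|^2 = |\llbracket \nabla_h v_h\rrbracket_\gamma\cdot\nu|^2 + |\llbracket\gamma_\tau\nabla_h v_h\rrbracket_\gamma|^2 \lesssim |\llbracket\gamma_\tau\nabla_h v_h\rrbracket_\gamma|^2$; the reverse bound $|\llbracket\gamma_\tau\nabla_h v_h\rrbracket_\gamma| \le |\llbracket\nabla_h v_h\rrbracket_\gamma|$ is trivial. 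This already gives the \emph{first} alternative $|\llbracket \nabla_h v_h\rrbracket_\gamma| \eqsim |\llbracket\gamma_\tau\nabla_h v_h\rrbracket_\gamma|$ unconditionally.

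But \eqref{eq:normalByTangential} alone is too strong to be the intended statement, so the real content must be a dichotomy involving the size of the gradients $|G_\pm|$, which enter through $A$ nonlinearly. The point is that $A(Q) = (\phi'(|Q|)/|Q|)\,Q$ is a radial rescaling of $Q$, and $\llbracket A(\nabla_h v_h)\rrbracket_\gamma = A(G_+) - A(G_-)$. When $|G_+|$ and $|G_-|$ are comparable, $A$ behaves like a smooth quasi-isometry (this is exactly what Proposition~\ref{prop:Properties}~\ref{itm:Udal2} together with the shift equivalences say), so $|\llbracket A(\nabla_h v_h)\rrbracket_\gamma| \eqsim \phi'_{|G_-|}(|\llbracket\nabla_h v_h\rrbracket_\gamma|)$ and likewise the normal part $|\llbracket A(\nabla_h v_h)\cdot\nu\rrbracket_\gamma|$ compares to $\phi'_{|G_-|}$ of the normal jump $|\llbracket\nabla_h v_h\rrbracket_\gamma\cdot\nu|$; then \eqref{eq:normalByTangential}, the decomposition of the jump into normal and tangential parts, and the monotonicity and $\Delta_2$-type scaling of $\phi'_{r}$ propagate the equivalence through $A$, and one lands in the first alternative again (or, equivalently, the second holds as well in this regime). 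The genuinely delicate case is when $|G_+|$ and $|G_-|$ are \emph{far apart}, say $|G_+| \gg |G_-|$ (after relabelling $T_\pm$). Here $|\llbracket\nabla_h v_h\rrbracket_\gamma| \eqsim |G_+|$, and $|\llbracket A(\nabla_h v_h)\rrbracket_\gamma| \eqsim |A(G_+)| = \phi'(|G_+|)$; one must show that in this regime the normal component $\llbracket A(\nabla_h v_h)\cdot\nu\rrbracket_\gamma$ already carries the full size $\phi'(|G_+|)$, i.e. that $G_+$ cannot be almost tangential. This is where the CR constraint re-enters: if $G_+$ were nearly tangential to $\gamma$, then $|\llbracket\nabla_h v_h\rrbracket_\gamma\cdot\nu|$ would be small relative to $|G_+| \eqsim |\llbracket\gamma_\tau\nabla_h v_h\rrbracket_\gamma|$, which by \eqref{eq:normalByTangential} is consistent — but then $|\llbracket\gamma_\tau\nabla_h v_h\rrbracket_\gamma| \eqsim |G_+| \eqsim |\llbracket\nabla_h v_h\rrbracket_\gamma|$, so the \emph{first} alternative holds. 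Conversely, if $G_+$ has a normal component comparable to $|G_+|$, then since $|G_-| \ll |G_+|$ the normal jump $\llbracket A(\nabla_h v_h)\cdot\nu\rrbracket_\gamma$ has modulus $\eqsim \phi'(|G_+|) \eqsim |\llbracket A(\nabla_h v_h)\rrbracket_\gamma|$, which is the \emph{second} alternative.

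Putting this together, the proof I would write is a short case distinction. \textbf{Case 1:} $|\llbracket\gamma_\tau\nabla_h v_h\rrbracket_\gamma| \eqsim |\llbracket\nabla_h v_h\rrbracket_\gamma|$ — here the first alternative holds by definition (and \eqref{eq:normalByTangential} shows this case is generic). \textbf{Case 2:} otherwise, the normal component of the gradient jump dominates, $|\llbracket\nabla_h v_h\rrbracket_\gamma\cdot\nu| \eqsim |\llbracket\nabla_h v_h\rrbracket_\gamma|$; combined with \eqref{eq:normalByTangential} this actually forces $|\llbracket\gamma_\tau\nabla_h v_h\rrbracket_\gamma|$ to be small only if one of $|G_\pm|$ dominates, and a short argument using Proposition~\ref{prop:Properties}~\ref{itm:Udal2} plus the triangle inequality for $\phi'_r$ upgrades the gradient statement $|\llbracket\nabla_h v_h\rrbracket_\gamma\cdot\nu| \eqsim |\llbracket\nabla_h v_h\rrbracket_\gamma|$ to the $A$-statement $|\llbracket A(\nabla_h v_h)\cdot\nu\rrbracket_\gamma| \eqsim |\llbracket A(\nabla_h v_h)\rrbracket_\gamma|$, the second alternative. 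The main obstacle, and the step deserving the most care, is the last implication: $A$ is nonlinear, and decomposing $A(G_+) - A(G_-)$ into "normal" and "tangential" parts is not the same as applying $A$ to the decomposition of $G_+ - G_-$. I expect to handle this by passing through the shifted modulus: use $|\llbracket A(\nabla_h v_h)\rrbracket_\gamma| \eqsim \phi'_{\min\{|G_\pm|\}}(|\llbracket\nabla_h v_h\rrbracket_\gamma|)$ and an analogous two-sided bound for each Cartesian component of $A(G_+)-A(G_-)$ in terms of $\phi'_{\min\{|G_\pm|\}}$ of the corresponding component of $G_+ - G_-$, so that the linear decomposition $|\llbracket\nabla_h v_h\rrbracket_\gamma|^2 = |\llbracket\nabla_h v_h\rrbracket_\gamma\cdot\nu|^2 + |\llbracket\gamma_\tau\nabla_h v_h\rrbracket_\gamma|^2$ transfers, up to the uniform-convexity constants, to the corresponding decomposition of $|\llbracket A(\nabla_h v_h)\rrbracket_\gamma|^2$.
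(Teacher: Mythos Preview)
Your central claim, the inequality you label \eqref{eq:normalByTangential}, is false, and the whole proposal rests on it. The Crouzeix--Raviart constraint says that $\llbracket v_h\rrbracket_\gamma$ has mean zero on $\gamma$; since $\llbracket v_h\rrbracket_\gamma(x) = (G_+-G_-)\cdot(x-\textup{mid}(\gamma))$ and $x-\textup{mid}(\gamma)$ is \emph{tangential} on $\gamma$, this identity only involves the tangential part of $G_+-G_-$ and places no constraint whatsoever on the normal component $(G_+-G_-)\cdot\nu$. A concrete counterexample: take $v_h$ continuous across $\gamma$ (so $v_h\in S^1_0(\tria)\subset V_h$); then $\llbracket\gamma_\tau\nabla_h v_h\rrbracket_\gamma=0$ while $\llbracket\nabla_h v_h\rrbracket_\gamma\cdot\nu$ can be any nonzero number. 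So the first alternative does \emph{not} hold unconditionally, and your subsequent case distinction, which repeatedly invokes \eqref{eq:normalByTangential}, collapses. Your fallback idea at the end---componentwise bounds of the form $|(A(G_+)-A(G_-))_i|\eqsim\phi'_{r}(|(G_+-G_-)_i|)$---also fails, because $A$ is a radial map and no such componentwise equivalence holds in general.

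The paper's proof is quite different and, notably, does not use the Crouzeix--Raviart structure at all; it is a pure two-vector argument valid for any $G_\pm\in\mathbb{R}^d$. The key observation is that the equivalences in Proposition~\ref{prop:Properties}~\ref{itm:Udal}--\ref{itm:New} combine to give
\[
|A(G_+)-A(G_-)|\,|G_+-G_-|\ \eqsim\ (A(G_+)-A(G_-))\cdot(G_+-G_-),
\]
i.e.\ the vectors $A(G_+)-A(G_-)$ and $G_+-G_-$ are \emph{almost parallel} (their angle is uniformly bounded away from $\pi/2$). Writing $G_+-G_-=|G_+-G_-|(\pm\rho\,\nu+\sqrt{1-\rho^2}\,\tau)$, one gets a constant $K$ with $|\llbracket A\rrbracket_\gamma|\le K\rho\,|\llbracket A\cdot\nu\rrbracket_\gamma|+K\sqrt{1-\rho^2}\,|\llbracket\gamma_\tau A\rrbracket_\gamma|$. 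If $|\llbracket A\cdot\nu\rrbracket_\gamma|\ge(2K)^{-1}|\llbracket A\rrbracket_\gamma|$ the second alternative holds; otherwise the displayed inequality forces $\sqrt{1-\rho^2}\ge(2K)^{-1}$, hence $|\llbracket\gamma_\tau\nabla_h v_h\rrbracket_\gamma|\ge(2K)^{-1}|\llbracket\nabla_h v_h\rrbracket_\gamma|$ and the first alternative holds. The ``almost parallel'' step is the idea you are missing.
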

\begin{proof}
Let $\gamma \in \faces(\Omega)$ and $v_h \in V_h$.
If $\llbracket \nabla_h v_h \rrbracket_\gamma = 0$, the statement is trivially satisfied. Hence, we focus on the case $\llbracket \nabla_h v_h \rrbracket_\gamma \neq 0$.
We decompose the jump $\llbracket \nabla_h v_h \rrbracket_\gamma \in \mathbb{R}^d$ into a normal and tangential component; that is, there is a constant $\rho \in [0,1]$ and a normalized tangential vector $\tau\in \mathbb{R}^d$ with respect to $\gamma$ such that
\begin{align}\label{eq:ProofNormal}
\frac{\llbracket \nabla_h v_h \rrbracket_\gamma}{|\llbracket \nabla_h v_h \rrbracket_\gamma|} = \rho \nu  + \sqrt{1-\rho^2}\,  \tau\quad \text{or}\quad \frac{\llbracket \nabla_h v_h \rrbracket_\gamma}{|\llbracket \nabla_h v_h \rrbracket_\gamma|} = -\rho \nu  + \sqrt{1-\rho^2}\, \tau.
\end{align} 
Due to Proposition~\ref{prop:Properties}~\ref{itm:New}, \ref{itm:Udal2}, and \ref{itm:Udal} one has
\begin{align*}
&|\llbracket A(\nabla_h v_h) \rrbracket_\gamma| \eqsim \phi_{|\nabla v_h(T_+)|}'(|\llbracket \nabla_h v_h\rrbracket_\gamma|) \eqsim \frac{\phi_{|\nabla v_h(T_+)|}(|\llbracket \nabla_h v_h\rrbracket_\gamma|)}{|\llbracket \nabla_h v_h\rrbracket_\gamma|}\\
&\quad \eqsim \frac{|\llbracket A(\nabla_h v_h) \rrbracket_\gamma \cdot \llbracket \nabla_h v_h \rrbracket_\gamma|}{|\llbracket \nabla_h v_h \rrbracket_\gamma|} \leq \rho\, |\llbracket A(\nabla_h v_h) \rrbracket_\gamma\cdot \nu| + \sqrt{1-\rho^2}\, |\llbracket A(\nabla_h v_h) \rrbracket_\gamma\cdot \tau|\\
& \quad \leq  |\llbracket A(\nabla_h v_h)\cdot \nu \rrbracket_\gamma| +  |\llbracket \gamma_\tau A(\nabla_h v_h) \rrbracket_\gamma| \leq \sqrt{2}\, |\llbracket A(\nabla_h v_h) \rrbracket_\gamma|.
\end{align*}
In particular, there exists a uniformly bounded constant $K \lesssim 1$ such that
\begin{align}\label{eq:K}
|\llbracket A(\nabla_h v_h) \rrbracket_\gamma| \leq K \rho\, |\llbracket A(\nabla_h v_h) \cdot \nu \rrbracket_\gamma| + K \sqrt{1-\rho^2}\, |\llbracket \gamma_\tau A(\nabla_h v_h) \rrbracket_\gamma|.
\end{align}

\textit{Case 1.}
Suppose that
\begin{align}\label{eq:CaseOne}
(2K)^{-1} |\llbracket A(\nabla_h v_h)\rrbracket_\gamma| \leq |\llbracket A(\nabla_h v_h) \cdot \nu\rrbracket_\gamma| \leq   |\llbracket A(\nabla_h v_h)\rrbracket_\gamma|.
\end{align}
Then the lemma is satisfied.

\textit{Case 2.} Suppose that the first inequality in \eqref{eq:CaseOne} is not satisfied; that is, 
\begin{align*}
|\llbracket A(\nabla_h v_h) \cdot \nu\rrbracket_\gamma| < (2K)^{-1} |\llbracket A(\nabla_h v_h)\rrbracket_\gamma|.
\end{align*}
This assumption and \eqref{eq:K} yield
\begin{align*}
|\llbracket A(\nabla_h v_h)\rrbracket_\gamma| &\leq K\, |\llbracket A(\nabla_h v_h) \cdot \nu \rrbracket_\gamma| + K\sqrt{ 1-\rho^2}\, |\llbracket \gamma_\tau A(\nabla_h v_h)\rrbracket_\gamma| \\
&\leq 1/2\,  |\llbracket A(\nabla_h v_h) \rrbracket_\gamma| + K\sqrt{ 1-\rho^2}\, |\llbracket \gamma_\tau A(\nabla_h v_h)\rrbracket_\gamma|.
\end{align*}
Absorbing the first term on the right-hand side implies that
\begin{align*}
1/2\, |\llbracket A(\nabla_h v_h)\rrbracket_\gamma| \leq K\sqrt{ 1-\rho^2}\, |\llbracket \gamma_\tau A(\nabla_h v_h)\rrbracket_\gamma| \leq K\sqrt{ 1-\rho^2}\, |\llbracket A(\nabla_h v_h)\rrbracket_\gamma|.
\end{align*}
Hence, we have $(2K)^{-1} \leq \sqrt{ 1-\rho^2}$.
Multiplying \eqref{eq:ProofNormal} with $\tau$ thus leads to
\begin{align*}
(2K)^{-1} |\llbracket \nabla_h v_h \rrbracket_\gamma|  \leq \sqrt{1-\rho^2}\, |\llbracket \nabla_h v_h \rrbracket_\gamma| = \llbracket \nabla_h v_h \cdot \tau \rrbracket_\gamma  \leq |\llbracket \gamma_\tau \nabla_h v_h \rrbracket_\gamma| \leq |\llbracket \nabla_h v_h \rrbracket_\gamma|.
\end{align*}
This completes the proof.
\end{proof}
With this auxiliary result we analyze the approximation properties of the conforming companion operator. The result involves the integral mean $\dashint_\omega\bigcdot \dx \coloneqq |\omega|^{-1} \int_\omega\bigcdot \dx$ with Lebesgue measure $|\omega|$ for domains $\omega \subset \Omega$.
\begin{lemma}[Enrichment operator/Conforming companion]\label{lem:confComp}
For any shift $a\geq 0$, any function $v_h \in V_h$, and any simplex $T\in \tria$ the conforming companion $\mathcal{E}$ satisfies
\begin{align}\label{eq:Enrichment}
\begin{aligned}
&\dashint_T \phi_{a} (h_T^{-1} |v_h - \mathcal{E}v_h|)\dx+ \dashint_T \phi_{a} (|\nabla_h (v_h -\mathcal{E}v_h)|) \dx \\
&\qquad\qquad\qquad\qquad \qquad\lesssim \sum_{\gamma \in \faces, \gamma \cap T\neq \emptyset} \dashint_\gamma \phi_{a} (|\llbracket \gamma_\tau \nabla_h v_h\rrbracket_\gamma|)\ds.
\end{aligned}
\end{align}
Moreover, for all $u_h,v_h \in V_h$, $v\in W^{1,1}_0(\Omega)$, and $\delta > 0$ one has with $e_h \coloneqq v_h-u_h$ 
\begin{align}\label{eq:Enrichment2}
\begin{aligned}
&\int_\Omega \phi_{|\nabla_h v_h|} (h_\tria^{-1} |e_h - \mathcal{E}e_h|)\dx + \int_\Omega \phi_{|\nabla_h v_h|} (|\nabla_h (e_h -\mathcal{E} e_h)|) \dx\\
&\lesssim C_\delta\int_\Omega \phi_{|\nabla_h v_h|} (|\nabla_h e_h - \nabla  v|) \dx +\delta \sum_{\gamma \in \faces(\Omega)} \int_{\omega_\gamma} \phi_{|\nabla_h v_h|} (| \llbracket \nabla_h v_h \rrbracket_\gamma|)\dx.
\end{aligned}
\end{align}
The latter term is bounded for any $w\in W^{1,1}(\Omega)$ by
\begin{align}\label{eq:Enrichtment3}
\begin{aligned}
 \sum_{\gamma \in \faces(\Omega)} \int_{\omega_\gamma} \phi_{|\nabla_h v_h|} (| \llbracket \nabla_h v_h \rrbracket_\gamma|)\dx& \lesssim \int_\Omega \phi_{|\nabla_h v_h|} (|\nabla_h v_h - \nabla  w|) \dx \\
 &\quad +  \int_\Omega (\phi_{|\nabla_h v_h|})^*(h_\tria |\divergence A(\nabla w)|) \dx.
\end{aligned}
\end{align} 
\end{lemma}
\begin{proof}
The proof extends the proof of \cite[Eq. 3.1]{Kaltenbach24} by distinguishing the two cases in \eqref{eq:Proofpasdqq} below, leading to the best-approximation properties.

Let $a\geq 0$, $v_h \in V_h$, and $T\in \tria$. 
An inverse estimate in $L^1(T)$ and Jensen's inequality lead in combination with the $\Delta_2$ condition to  
\begin{align*}
\phi_{a} (|\nabla_h (v_h -\mathcal{E}v_h)|) &\lesssim  \phi_{a} \Big( \dashint_T h_T^{-1} | v_h -\mathcal{E}v_h| \dx \Big) \leq \dashint_T \phi_{a} (h_T^{-1} | v_h -\mathcal{E}v_h|) \dx.
\end{align*}
Hence, it suffices to bound the first term in \eqref{eq:Enrichment}. Let $\nodes(T)$ denote the set of vertices of $T$.
One has for all $x\in T$ the bound
\begin{align*}
| v_h(x) -\mathcal{E}v_h(x)| \leq \lVert v_h -\mathcal{E}v_h\rVert_{L^\infty(T)} = \max_{j\in \nodes(T)} |v_h(j) - \mathcal{E}v_h(j)|.
\end{align*}
By the definition in~\eqref{eq:DefEnrich} we can bound for each interior vertex $j\in \nodes(T) \cap \nodes(\Omega)$ the right-hand side via the sum of jumps of face neighbors in the sense that
\begin{align*}
|v_h(j) - \mathcal{E}v_h(j)| & \leq \frac{\max\lbrace v_h|_T(j) \colon T \in \tria_j\rbrace-\min\lbrace v_h|_T(j) \colon T \in \tria_j\rbrace}{2} \\
& \leq \frac{1}{2} \sum_{\gamma\in \faces(\Omega), j \in \gamma} |v_h(T_+(\gamma))(j) - v_h(T_-(\gamma))(j)|.
\end{align*}
If $j\in \nodes(T)$ is a boundary vertex in the sense that $j \in \partial\Omega$, there exists a simplex $T' \in \tria$ with face $j \in \gamma' \in \faces(T') \cap \faces(\partial\Omega)$ and one has
\begin{align*}
|v_h(j) - \mathcal{E}v_h(j)| = |v_h(j)| & \leq  |v_h(T')(j)| + \sum_{\gamma\in \faces(\Omega), j \in \gamma} |v_h(T_+(\gamma))(j) - v_h(T_-(\gamma))(j)|.
\end{align*}%
For each $\gamma \in \faces(\Omega)$ with $j \in \gamma$ the addends are bounded by 
\begin{align*}
|v_h(T_+(\gamma))(j) - v_h(T_-(\gamma))(j)| \lesssim h_T |\llbracket \gamma_\tau \nabla_h v_h \rrbracket_\gamma|.
\end{align*}
Moreover, for $T'\in \tria$ with $\gamma'\in \faces(T') \cap \faces(\partial\Omega)$ and $j \in \gamma'$ one has the upper bound
\begin{align*}
|v_h(T')(j)| = |v_h(T')(j) - v_h(T')(\textup{mid}(\gamma'))| \lesssim h_T |\llbracket \gamma_\tau \nabla_h v_h \rrbracket_{\gamma'}|.
\end{align*}%
Combining the previous estimates with the triangle inequality in Proposition~\ref{prop:Properties}~\ref{itm:triangleInequ} concludes the proof of \eqref{eq:Enrichment}.

We now extend the statement with fixed shift $a\geq 0$ to the piecewise constant function $|\nabla_h v_h|$. Recall the definition $\nabla v_h(T) \coloneqq (\nabla_h v_h)|_T \in \mathbb{R}^d$ for all $T\in \tria$.
Using \eqref{eq:Enrichment}, we obtain for $e_h \coloneqq v_h - u_h$ with $u_h \in V_h$ and $T\in \tria$ the estimate 
\begin{align}\label{eq:Proofaaaassskkk0}
\begin{aligned}
&\int_\Omega \phi_{|\nabla_h v_h|} (h_T^{-1} |e_h - \mathcal{E}e_h|)\dx + \int_\Omega \phi_{|\nabla_h v_h|} (|\nabla_h (e_h -\mathcal{E}e_h)|) \dx \\
&\quad= \sum_{T\in \tria} \int_T \phi_{|\nabla v_h(T)|} (h_T^{-1} |e_h - \mathcal{E}e_h|)\dx + \int_T \phi_{|\nabla v_h(T)|} (|\nabla_h (e_h -\mathcal{E}e_h)|) \dx\\
&\quad \lesssim \sum_{T\in \tria} \sum_{\gamma \in \faces, \gamma \cap T\neq \emptyset} h_T \int_\gamma \phi_{|\nabla v_h(T)|} (|\llbracket \gamma_\tau \nabla_h e_h\rrbracket_\gamma|)\ds.
\end{aligned}
\end{align}
For $T\in \tria$ define the element patch $\tria_T\coloneqq \lbrace T'' \in \tria\colon T\cap T''\neq \emptyset\rbrace$ and let $\gamma \in \faces$ with $\gamma \cap T \neq \emptyset$. The simplex $T_+(\gamma) \in \tria_T$ is connected to $T$ by a chain of face-neighbors $T_0\dots,T_J \in \tria_T$ in the sense that $T_0 = T$, $T_+(\gamma) = T_J$, and 
\begin{align*}
\gamma_j \coloneqq T_j \cap T_{j+1} \in \faces(\Omega)\qquad\text{for all }j=0,\dots,J-1.
\end{align*}
Such a chain exists with a uniformly bounded number $J\in \mathbb{N}_0$ of simplices due to the shape regularity of $\tria$.
An iterative application of the shift-change in Proposition~\ref{prop:Properties}~\ref{itm:shiftChange} and the equivalent notions of distances in Proposition~\ref{prop:Distances} yield for any $\delta >0$ and $\gamma \in \faces$ that
\begin{align}\label{eq:Proofaaassskkkkk-}
\begin{aligned}
h_T \int_\gamma \phi_{|\nabla v_h(T)|} (|\llbracket \gamma_\tau \nabla_h e_h\rrbracket_\gamma|)\ds &\leq C_\delta \, h_{T_+(\gamma)} \int_\gamma \phi_{|\nabla v_h(T_+(\gamma))|} (|\llbracket \gamma_\tau \nabla_h e_h\rrbracket_\gamma|)\ds\\
&\quad  + \delta\, \sum_{j=0}^{J-1}h_{T_j} \int_{\gamma_j} \phi_{|\nabla v_h(T_j)|} (|\llbracket \nabla_h v_h\rrbracket_{\gamma_j}|)\ds.
\end{aligned}
\end{align}
Combining these estimates with \eqref{eq:Proofaaaassskkk0} leads with $h_\gamma \coloneqq \textup{diam}(\gamma)$ to the bound
\begin{align}\label{eq:Proofaaaassskkk02}
\begin{aligned}
&\int_\Omega \phi_{|\nabla_h v_h|} (h_T^{-1} |e_h - \mathcal{E}e_h|)\dx + \int_\Omega \phi_{|\nabla_h v_h|} (|\nabla_h (e_h -\mathcal{E}e_h)|) \dx \\
& \qquad\qquad\qquad\qquad\lesssim C_\delta\sum_{\gamma \in \faces} h_\gamma \int_\gamma \phi_{|\nabla v_h(T_+(\gamma))|} (|\llbracket \gamma_\tau \nabla_h e_h\rrbracket_\gamma|)\ds \\
&\qquad\qquad\qquad\qquad\quad  + \delta \sum_{\gamma \in \faces(\Omega)} \int_{\omega_\gamma} \phi_{|\nabla_h v_h|} (|\llbracket  \nabla_h v_h\rrbracket_{\gamma}|)\dx.
\end{aligned}
\end{align}

We proceed with bounding the addends on the right-hand side. 
Let $\gamma \in \faces(\Omega)$ and let $b_\gamma \in W^{1,\infty}_0(\omega_\gamma;\mathbb{R}^d)$ denote a normalized vector-valued bubble function with 
\begin{align*}
\dashint_\gamma b_\gamma \ds = \frac{\llbracket \gamma_\tau\nabla_h e_h \rrbracket_\gamma}{|\llbracket \gamma_\tau\nabla_h e_h \rrbracket_\gamma|},\qquad \lVert b_\gamma \rVert_{L^\infty(\omega_\gamma)} \lesssim 1,\qquad \lVert \nabla b_\gamma \rVert_{L^\infty(\omega_\gamma)} \lesssim h_\gamma^{-1}.
\end{align*}
An elementwise integration by parts reveals for any $v \in W^{1,1}_0(\Omega)$
\begin{align}\label{eq:asdasdasdasd}
\begin{aligned}
&|\llbracket \gamma_\tau \nabla_h e_h \rrbracket_\gamma| = \dashint_{\gamma} \llbracket \gamma_\tau \nabla_h e_h \rrbracket_\gamma \cdot b_\gamma\ds = -\frac{1}{|\gamma|} \int_{\omega_\gamma} \nabla_h e_h \cdot \curl b_\gamma\dx  \\
&\qquad = \frac{1}{|\gamma|} \int_{\omega_\gamma} \nabla_h ( v-e_h) \cdot \curl b_\gamma\dx \lesssim \dashint_{\omega_\gamma} |\nabla_h ( v -e_h)|\dx. 
\end{aligned}
\end{align}
This estimate, Jensen's inequality, the shift-change in Proposition~\ref{prop:Properties}~\ref{itm:shiftChange}, and Proposition~\ref{prop:Distances} imply
\begin{align}\label{eq:Proof2asdaggaNew}
\begin{aligned}
&h_\gamma \int_{\gamma} \phi_{ |\nabla v_h(T_+(\gamma))|}(|\llbracket \gamma_\tau \nabla_h e_h\rrbracket_\gamma|) \ds \lesssim  \int_{\omega_\gamma} \phi_{ |\nabla v_h(T_+(\gamma))|}(|\nabla_h (v-e_h)|) \dx\\
&\qquad\leq C_\delta \int_{\omega_\gamma} \phi_{ |\nabla_h v_h|}(|\nabla_h (v-e_h)|) \dx + \delta \int_{\omega_\gamma}  \phi_{ |\nabla_h v_h|}(|\llbracket  \nabla_h v_h\rrbracket_\gamma|) \dx. 
\end{aligned}
\end{align}
For boundary faces $\gamma \in \faces(\partial \Omega)$ we slightly modify the arguments in \eqref{eq:asdasdasdasd} by using a bubble function $b_\gamma \in W^{1,\infty}(T_+(\gamma);\mathbb{R}^d)$ with $b_\gamma = 0$ on $\partial T_+(\gamma)\setminus \gamma$  to conclude
\begin{align}\label{eq:Proof2asdaggaNew2}
h_\gamma \int_{\gamma} \phi_{ |\nabla v_h(T_+(\gamma))|}(|\llbracket \gamma_\tau \nabla_h e_h\rrbracket_\gamma|) \ds \lesssim  \int_{\omega_\gamma} \phi_{ |\nabla_h v_h|}(|\nabla_h (v-e_h)|) \dx.
\end{align}%
Combining the results leads to \eqref{eq:Enrichment2}.

It remains to verify the best-approximation property in \eqref{eq:Enrichtment3}. For this, we fix an interior face $\gamma \in \faces(\Omega)$. 
According to Lemma~\ref{lem:TwoCases}, one has
\begin{align}\label{eq:Proofpasdqq}
|\llbracket \nabla_h v_h \rrbracket_\gamma| \eqsim |\llbracket \gamma_\tau \nabla_h v_h \rrbracket_\gamma|\qquad\text{or}\qquad |\llbracket A(\nabla_h v_h) \rrbracket_\gamma| \eqsim |\llbracket A(\nabla_h v_h)\cdot\nu \rrbracket_\gamma|.
\end{align}

\textit{Case 1.} If $|\llbracket \nabla_h v_h \rrbracket_\gamma| \eqsim |\llbracket \gamma_\tau \nabla_h v_h \rrbracket_\gamma|$, we use the result in \eqref{eq:Proof2asdaggaNew} with $e_h = v_h$ (that is $u_h = 0)$ to conclude for any $w\in W^{1,1}(\Omega)$ that 
\begin{align*}
& \int_{\omega_\gamma} \phi_{|\nabla_h v_h|} (|\llbracket  \nabla_h v_h\rrbracket_{\gamma}|)\dx \eqsim \sum_{T =T_-(\gamma),T_+(\gamma)}h_\gamma  \int_{\gamma} \phi_{ |\nabla v_h(T)|}(|\llbracket \gamma_\tau \nabla_h v_h\rrbracket_\gamma|) \ds \\ 
&\qquad \leq C_\delta \int_{\omega_\gamma} \phi_{ |\nabla_h v_h|}(|\nabla_h (w-v_h)|) \dx + \delta \int_{\omega_\gamma}  \phi_{ |\nabla_h v_h|}(|\llbracket  \nabla_h v_h\rrbracket_\gamma|) \dx.
\end{align*}
Absorbing the latter term yields the desired result
\begin{align*}
\int_{\omega_\gamma} \phi_{|\nabla_h v_h|} (|\llbracket  \nabla_h v_h\rrbracket_{\gamma}|)\dx \lesssim \int_{\omega_\gamma} \phi_{ |\nabla_h v_h|}(|\nabla_h (w-v_h)|) \dx.
\end{align*}

\textit{Case 2.} Suppose the second case in \eqref{eq:Proofpasdqq} applies; that is, $|\llbracket A( \nabla_h v_h) \rrbracket_\gamma| \eqsim |\llbracket A(\nabla_h v_h)\cdot \nu \rrbracket_\gamma|$.
This equivalence, together with Proposition~\ref{prop:Properties}~\ref{itm:Udal}, implies 
\begin{align*}
h_\gamma \int_{\gamma} \phi_{ |\nabla v_h(T_+(\gamma))|}(|\llbracket  \nabla_h v_h\rrbracket_\gamma|) \ds \eqsim h_\gamma \int_{\gamma} (\phi_{ |\nabla v_h(T_+(\gamma))|})^*(|\llbracket  A(\nabla_h v_h) \cdot \nu \rrbracket_\gamma|) \ds.
\end{align*}
Let $b_\gamma \in W^{1,\infty}_0(\omega_\gamma)$ denote a bubble function with $\dashint_\gamma b_\gamma \ds = \sgn(\llbracket A(\nabla_h v_h) \cdot \nu \rrbracket_\gamma)$, $\lVert b_\gamma \rVert_{L^\infty(\omega_\gamma)} \lesssim 1$ and $\lVert \nabla b_\gamma \rVert_{L^\infty(\omega_\gamma)} \lesssim h_\gamma^{-1}$. 
An integration by parts and Proposition~\ref{prop:Properties}~\ref{itm:Udal2} reveal for any $w\in W^{1,1}(\Omega)$ that 
\begin{align*}
&|\llbracket A(\nabla_h v_h) \cdot \nu \rrbracket_\gamma| = \dashint_\gamma \llbracket A(\nabla_h v_h) \cdot \nu \rrbracket_\gamma b_\gamma \ds =  \frac{1}{|\gamma|} \int_{\omega_\gamma} A(\nabla_h v_h) \cdot \nabla b_\gamma \dx\\
&\qquad  =  \frac{1}{|\gamma|} \int_{\omega_\gamma} \big(A(\nabla_h v_h)-A(\nabla w)\big) \cdot \nabla b_\gamma \dx - \frac{1}{|\gamma|} \int_{\omega_\gamma}b_\gamma \divergence A(\nabla w) \dx\\
&\qquad \lesssim \dashint_{\omega_\gamma} |A(\nabla_h v_h)-A(\nabla w)|\dx + \dashint_{\omega_\gamma} h_\gamma |\divergence A(\nabla w)|\dx.
\end{align*}
This bound, Jensen's inequality, and Proposition~\ref{prop:Properties}~\ref{itm:Udal} and \ref{itm:shiftChange} lead to the estimate
\begin{align*}
&h_\gamma \int_{\gamma} (\phi_{ |\nabla v_h(T_+(\gamma))|})^*(|\llbracket  A(\nabla_h v_h) \cdot \nu \rrbracket_\gamma|) \ds\\
& \lesssim  \int_{\omega_\gamma} (\phi_{ |\nabla v_h(T_+(\gamma))|})^*(|A(\nabla w) -A(\nabla_h v_h) |) \dx \\
&\quad + \int_{\omega_\gamma} (\phi_{ |\nabla v_h(T_+)|})^*(h_\gamma| \divergence(A(\nabla w))|) \dx\\
&\leq C_\delta  \int_{\omega_\gamma} \phi_{ |\nabla_h v_h|}(|  \nabla_h (w - v_h) |) \dx +C_\delta \int_{\omega_\gamma} (\phi_{ |\nabla_h v_h|})^*(h_\gamma| \divergence(A(\nabla w))|) \dx \\
&\quad + \delta \int_{\omega_\gamma}  (\phi_{ |\nabla_h v_h|})^*(|\llbracket  A(\nabla_h v_h) \rrbracket_\gamma|) \dx.
\end{align*}
With Proposition~\ref{prop:Properties}~\ref{itm:Udal} this yields
\begin{align}\label{eq:ProofFINALL}
\begin{aligned}
&\int_{\omega_\gamma} \phi_{ |\nabla_h v_h|}(|\llbracket  \nabla_h v_h\rrbracket_\gamma|) \dx \\
&\eqsim \sum_{T = T_+(\gamma),T_-(\gamma)} h_\gamma \int_{\gamma} (\phi_{ |\nabla v_h(T)|})^*(|\llbracket  A (\nabla_h v_h)\cdot \nu\rrbracket_\gamma|) \ds \\
&\lesssim C_\delta \int_{\omega_\gamma} \phi_{ |\nabla_h v_h|}(|  \nabla_h (w - v_h) |) \dx +  C_\delta \int_{\omega_\gamma} (\phi_{ |\nabla_h v_h|})^*(h_\gamma| \divergence(A(\nabla w))|) \dx\\
&\quad + \delta \int_{\omega_\gamma} \phi_{ |\nabla_h v_h|}(|\llbracket  \nabla_h v_h \rrbracket_\gamma|) \dx.
\end{aligned}
\end{align}
Since the latter term can be absorbed, combining Case 1 and Case 2 yields \eqref{eq:Enrichtment3} and concludes the proof.
\end{proof}

\section{Medius analysis}\label{sec:MediusAnalysis}
Medius analysis, introduced by Gudi in \cite{Gudi10,Gudi10b}, derives a priori error estimates by using a posteriori techniques. A typical term that occurs in this line of argument is the data oscillation. This higher-order term, cf.~Remark~\ref{rem:HigherOrder} below, is defined locally for all $v \in V+  V_h$ and $T\in \tria$ with the $L^2(\Omega)$ orthogonal projection onto piecewise constant $\Pi_0\colon L^1(\Omega) \to \mathbb{P}_0(\tria)$ by
\begin{align*}
\osc^2(v,f;T) \coloneqq \int_T (\phi_{|\nabla_h v|})^* (h_T |f - \Pi_0 f|)\dx.
\end{align*}
The global data oscillation term reads
\begin{align}\label{eq:DefOsc}
\osc^2(v,f;\tria) \coloneqq \sum_{T\in \tria} \osc^2(v,f;T).
\end{align}

\begin{lemma}[Oscillation]\label{lem:Oscillation}
Let $u\in V$ solve \eqref{eq:MinProb}. Then one has for any $T\in \tria$ and $v_h \in V_h$ the bound
\begin{align*}
\int_T (\phi_{|\nabla_h v_h|})^*(h_T\,|f|)\dx \lesssim \lVert F(\nabla u) - F(\nabla_h v_h) \rVert^2_{L^2(T)} + \osc^2(v_h,f;T).
\end{align*}
Moreover, one has
\begin{align*}
 \osc^2(v_h,f;T) \lesssim \osc^2(u,f;T) + \lVert  F(\nabla u) - F(\nabla_h v_h) \rVert_{L^2(T)}^2.
\end{align*}
\end{lemma}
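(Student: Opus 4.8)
The plan is to prove the two estimates in turn, using the $\Delta_2$ property of $\phi^*$, the Poincar\'e-type inequality of Proposition~\ref{prop:Properties}~\ref{itm:Poincare}, and the shift-change estimate together with the equivalence $\mathcal{J}(v)-\mathcal{J}(u)\eqsim\lVert F(\nabla u)-F(\nabla v)\rVert_{L^2}^2$ from Proposition~\ref{prop:Distances}. For the first estimate, I would start from the triangle-type inequality $\phi^*(h_T|f|)\lesssim\phi^*(h_T|f-\Pi_0f|)+\phi^*(h_T|\Pi_0 f|)$ applied with the shifted conjugate $(\phi_{|\nabla_h v_h|})^*$ (legitimate by Proposition~\ref{prop:Properties}~\ref{itm:ShitedN}, so the shifted function is still a uniformly convex N-function and hence satisfies $\Delta_2$). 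Integrating over $T$, the first summand is exactly $\osc^2(v_h,f;T)$, so it remains to bound $\int_T(\phi_{|\nabla_h v_h|})^*(h_T|\Pi_0 f|)\dx$. On the simplex $T$ the quantity $h_T|\Pi_0 f|$ is a constant, and $|T|\,h_T|\Pi_0 f|=h_T\bigl|\int_T f\dx\bigr|$; the idea is to test the weak formulation \eqref{eq:pLaplaPrb} against a suitable function supported in $T$ (a scaled bubble function $b_T$ with $\dashint_T b_T\dx=\operatorname{sgn}(\Pi_0 f)$, $\lVert b_T\rVert_{L^\infty}\lesssim1$, $\lVert\nabla b_T\rVert_{L^\infty}\lesssim h_T^{-1}$) to convert $\int_T f b_T\dx$ into $\int_T A(\nabla u)\cdot\nabla b_T\dx=\int_T(A(\nabla u)-A(\nabla_h v_h))\cdot\nabla b_T\dx$, using that $A(\nabla_h v_h)$ is piecewise constant so $\int_T A(\nabla_h v_h)\cdot\nabla b_T\dx=0$. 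This gives $h_T|\Pi_0 f|\lesssim\dashint_T|A(\nabla u)-A(\nabla_h v_h)|\dx$ up to the oscillation correction $h_T|f-\Pi_0 f|$; feeding this into $(\phi_{|\nabla_h v_h|})^*$, applying Jensen, and invoking Proposition~\ref{prop:Properties}~\ref{itm:Udal} to identify $(\phi_{|\nabla_h v_h|})^*(|A(\nabla_h v_h)-A(\nabla u)|)\eqsim|F(\nabla u)-F(\nabla_h v_h)|^2$ yields the claimed bound, after one more shift-change to pass from the shift $|\nabla_h v_h|$ appearing inside to whatever shift is convenient and absorbing.

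For the second estimate I would write, for a fixed $T$ and using the triangle-type inequality for $(\phi_{|\nabla_h v_h|})^*$ again,
\begin{align*}
\osc^2(v_h,f;T)=\int_T(\phi_{|\nabla_h v_h|})^*(h_T|f-\Pi_0 f|)\dx
\end{align*}
and compare it with the same expression carrying the shift $|\nabla u|$ instead of $|\nabla_h v_h|$. The shift-change estimate in Proposition~\ref{prop:Properties}~\ref{itm:shiftChange}, in its conjugate form, gives pointwise
\begin{align*}
(\phi_{|\nabla_h v_h|})^*(h_T|f-\Pi_0 f|)\leq(1+C_\delta)(\phi_{|\nabla u|})^*(h_T|f-\Pi_0 f|)+\delta\,|F(\nabla u)-F(\nabla_h v_h)|^2.
\end{align*}
Integrating over $T$ the first term is $(1+C_\delta)\osc^2(u,f;T)$ and the second is $\delta\lVert F(\nabla u)-F(\nabla_h v_h)\rVert_{L^2(T)}^2$; since $\delta$ multiplies only a finite quantity there is nothing to absorb and the estimate follows directly with $C=1+C_\delta$ for any fixed choice of $\delta$, say $\delta=1$.

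The main obstacle is the first estimate, specifically the bubble-function argument converting the local data term $h_T|\Pi_0 f|$ into the distance $\lVert F(\nabla u)-F(\nabla_h v_h)\rVert_{L^2(T)}$: one must be careful that $A(\nabla u)$ need not be more regular than $L^{p'}$, so the integration by parts used in the companion-operator lemma is not available here; instead one works with the weak formulation \eqref{eq:pLaplaPrb} directly (which is exactly $\int A(\nabla u)\cdot\nabla v=\int f v$), avoiding any appearance of $\operatorname{div}A(\nabla u)$. A secondary technical point is the bookkeeping of shifts: the natural shift produced by testing is $|\nabla u|$ (through $A(\nabla u)$), whereas the statement carries the shift $|\nabla_h v_h|$, so one applies Proposition~\ref{prop:Properties}~\ref{itm:shiftChange} once more and absorbs the resulting $\delta\lVert F(\nabla u)-F(\nabla_h v_h)\rVert_{L^2(T)}^2$ into the left-hand side — harmless since that term also appears on the right. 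Finally one should note that $(\phi_{|\nabla_h v_h|})^*$ inherits $\Delta_2$ uniformly in the shift, which is what makes all the triangle-type splittings legitimate with shift-independent constants; this is guaranteed by Proposition~\ref{prop:Properties}~\ref{itm:ShitedN} and~\ref{itm:Young}.
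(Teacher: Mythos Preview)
Your proposal is correct and follows essentially the same route as the paper: split via the triangle-type inequality for $(\phi_{|\nabla_h v_h|})^*$, control the constant part $h_T|\Pi_0 f|$ by testing the weak formulation~\eqref{eq:pLaplaPrb} against a scaled element bubble $b_T$ (using that $A(\nabla_h v_h)$ is constant on $T$ so $\int_T A(\nabla_h v_h)\cdot\nabla b_T\dx=0$), apply Jensen and Proposition~\ref{prop:Properties}~\ref{itm:Udal}, and obtain the second estimate by a direct shift-change. One small simplification: the extra shift-change you flag as a ``secondary technical point'' is not needed, since Proposition~\ref{prop:Properties}~\ref{itm:Udal} with $Q=\nabla_h v_h$ and $P=\nabla u$ already gives $(\phi_{|\nabla_h v_h|})^*(|A(\nabla_h v_h)-A(\nabla u)|)\eqsim|F(\nabla u)-F(\nabla_h v_h)|^2$ directly with the desired shift.
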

\begin{proof}
This classical result can be found in \cite[Lem.~9]{DieningKreuzer08}, see also \cite[Lem.~3.2]{Kaltenbach24}. We include the instructive proof for the sake of completeness.
Let $T\in \tria$ and $v_h \in V_h$. The triangle inequality in Proposition~\ref{prop:Properties}~\ref{itm:triangleInequ} implies
\begin{align*}
\int_T (\phi_{|\nabla_h v_h|})^*(h_T\,|f|)\dx &\lesssim \int_T (\phi_{|\nabla_h v_h|})^*(h_T\,|f - \Pi_0 f|)\dx\\
&\quad + \int_T (\phi_{|\nabla_h v_h|})^*(h_T\,|\Pi_0 f|)\dx.
\end{align*}
Let $b_T \in W^{1,\infty}_0(T)$ be a bubble function with 
\begin{align*}
\dashint_T b_T \dx = \sgn((\Pi_0 f)|_T),\qquad\lVert b_T \rVert_{L^\infty(T)}\lesssim 1,\qquad \lVert \nabla b_T \rVert_{L^\infty(T)}\lesssim h_T^{-1}.
\end{align*}
The latter term satisfies due to the triangle inequality in Proposition~\ref{prop:Properties}~\ref{itm:triangleInequ} 
\begin{align}\label{eq:Proofasmmms}
\begin{aligned}
&\int_T (\phi_{|\nabla_h v_h|})^*(h_T\,|\Pi_0 f|)\dx = \int_T (\phi_{|\nabla_h v_h|})^*\Big( h_T\Big| \dashint_T (\Pi_0 f) b_T \dy\Big|\Big)\dx\\
&\lesssim  \int_T (\phi_{|\nabla_h v_h|})^*\Big( h_T\Big| \dashint_T (f-\Pi_0 f) b_T \dy\Big|\Big)\dx  + \int_T (\phi_{|\nabla_h v_h|})^*\Big( h_T\Big| \dashint_T  f b_T \dy\Big|\Big)\dx.
\end{aligned}
\end{align}
The property $\lVert b_T \rVert_{L^\infty(T)} \lesssim 1$, the $\Delta_2$ condition in Proposition~\ref{prop:Properties}~\ref{itm:Young}, and 
Jensen's inequality show for the first addend in \eqref{eq:Proofasmmms} that 
\begin{align*}
 \int_T (\phi_{|\nabla_h v_h|})^*\Big( h_T\Big| \dashint_T (f-\Pi_0 f) b_T \dy\Big|\Big)\dx &\lesssim 
 \int_T (\phi_{|\nabla_h v_h|})^*\Big( \dashint_T h_T\, | f-\Pi_0 f\, |\dy\Big)\dx \\
& \leq \int_T (\phi_{|\nabla_h v_h|})^*( h_T\, | f-\Pi_0 f\, |)\dx.
\end{align*}
An integration by parts, Proposition~\ref{prop:Properties}~\ref{itm:Udal}, and Jensen's inequality show for the second addend in \eqref{eq:Proofasmmms} that 
\begin{align*}
&\int_T (\phi_{|\nabla_h v_h|})^*\Big( h_T\Big| \dashint_T  f b_T \dy\Big|\Big)\dx\\
&\qquad = \int_T (\phi_{|\nabla_h v_h|})^*\Big( h_T\Big| \dashint_T (A(\nabla u ) - A(\nabla_h v_h)) \cdot \nabla b_T \dy\Big|\Big)\dx \\
&\qquad \lesssim \int_T (\phi_{|\nabla_h v_h|})^*\Big( \dashint_T |A(\nabla u ) - A(\nabla_h v_h)| \dy\Big)\dx \\
&\qquad\leq \int_T (\phi_{|\nabla_h v_h|})^*(|A(\nabla u ) - A(\nabla_h v_h)| )\dx.
\end{align*}
Combining these estimates and applying Proposition~\ref{prop:Properties}~\ref{itm:Udal} conclude the proof of the first bound in the lemma. The second bound follows by the shift-change in Proposition~\ref{prop:Properties}~\ref{itm:shiftChange}.
\end{proof}
After this preliminary result, we are able to verify this paper's main result.
The proof follows the ideas of \cite[Lem.~2.1]{Gudi10}; see also \cite[Sec.~2.2]{Brenner15} and \cite[Thm.~3.1]{Kaltenbach24}. The latter already extends the arguments from the linear to the nonlinear setting, but it does not separate normal and tangential jump contributions, which leads to the estimate in \eqref{eq:estKalt}.
\begin{proof}[Proof of Theorem~\ref{thm:mainResult}]
For any $v_h \in V_h$ one has
\begin{align*}
&\lVert F(\nabla u)- F(\nabla_h u_h) \rVert_{L^2(\Omega)}\\
&\qquad\qquad \leq \lVert F(\nabla u)- F(\nabla_h v_h) \rVert_{L^2(\Omega)} + \lVert F(\nabla_h u_h)- F(\nabla_h v_h) \rVert_{L^2(\Omega)}.
\end{align*}
The second term satisfies with $e_h \coloneqq v_h - u_h$
\begin{align}\label{eq:MainSplitProof}
\begin{aligned}
\lVert F(\nabla_h u_h)- F(\nabla_h v_h) \rVert_{L^2(\Omega)}^2 &\eqsim \langle A(\nabla_h v_h) - A(\nabla_h u_h), \nabla_h e_h\rangle_\Omega \\
& = \langle A(\nabla_h v_h), \nabla_h (e_h - \cE e_h)\rangle_\Omega \\
&\quad
+\langle f,\cE e_h - e_h\rangle_\Omega\\
&\quad
+\langle A(\nabla_h v_h) - A(\nabla_h u_h), \nabla \cE e_h\rangle_\Omega.
\end{aligned}
\end{align}
Let $\cE$ denote the $\mathbb{P}_1$-conforming companion introduced in Section~\ref{sec:Companion}

\textit{Step 1 (First addend in \eqref{eq:MainSplitProof}).}
The trace inequality and an inverse estimate yield for any interior face $\gamma \in \faces(\Omega)$
\begin{align*}
\Big| \dashint_\gamma e_h - \cE e_h\ds\Big| &\lesssim \dashint_{\omega_\gamma} | e_h - \cE e_h| \dx +  h_\gamma\dashint_{\omega_\gamma} |\nabla_h ( e_h - \cE e_h)| \dx \\ 
&\lesssim \dashint_{\omega_\gamma} |e_h - \cE e_h|\dx.
\end{align*}
Combining this estimate with an elementwise integration by parts leads to
\begin{align}\label{eq:sadkkgeq}
\begin{aligned}
\langle A(\nabla_h v_h), \nabla_h( e_h - \cE e_h)\rangle_\Omega & = \sum_{\gamma \in \faces(\Omega)} \llbracket A(\nabla_h v_h) \cdot \nu \rrbracket_\gamma \int_\gamma e_h- \cE e_h\ds \\
&\lesssim \sum_{\gamma \in \faces(\Omega)} \int_{\omega_\gamma} |\llbracket A(\nabla_h v_h) \cdot \nu \rrbracket_\gamma|\, h_\gamma^{-1}  |e_h- \cE e_h|\dx .
\end{aligned}
\end{align}
Proposition~\ref{prop:Properties}~\ref{itm:Udal} and Lemma~\ref{lem:confComp} imply for any $\delta >0$  that
\begin{align*}
\begin{aligned}
&\sum_{\gamma\in \faces(\Omega)} \int_{\omega_\gamma} |\llbracket A(\nabla_h v_h) \cdot \nu \rrbracket_\gamma|\,  h_\gamma^{-1}  |e_h- \cE e_h|\dx\\
& \leq \sum_{\gamma\in \faces(\Omega)} C_\delta \int_{\omega_\gamma}(\phi_{|\nabla_h v_h|})^*(|\llbracket A(\nabla_h v_h) \cdot \nu\rrbracket_\gamma|) \dx  + \delta\int_{\omega_\gamma}  \phi_{|\nabla_h v_h|}( h_\gamma^{-1}  |e_h- \cE e_h| ) \dx \\
&\leq \sum_{\gamma\in \faces(\Omega)} C_\delta \int_{\omega_\gamma}\phi_{|\nabla_h v_h|}(|\llbracket \nabla_h v_h \rrbracket_\gamma|) \dx +  \delta\, \lVert F(\nabla_h u_h) - F(\nabla_h v_h)\rVert^2_{L^2(\Omega)} \\
& \leq C_\delta \, \lVert F(\nabla u) - F(\nabla_h v_h)\rVert^2_{L^2(\Omega)} + C_\delta\int_\Omega (\phi_{|\nabla_h v_h|})^*(h_\tria\,|f|)\dx\\
&\quad  + \delta\, \lVert F(\nabla_h u_h) - F(\nabla_h v_h)\rVert^2_{L^2(\Omega)}.
\end{aligned}
\end{align*}

\textit{Step 2 (Second addend in \eqref{eq:MainSplitProof}).}
Young's inequality and Lemma~\ref{lem:confComp} show that 
\begin{align*}
\langle f , \cE e_h - e_h\rangle_\Omega & \leq C_\delta \int_\Omega (\phi_{|\nabla_h v_h|})^* (h_\tria\, |f|)\dx + \delta\, \int_\Omega \phi_{|\nabla_h v_h|}(h_\tria^{-1}|e_h -\cE e_h|)\dx  \\
&\leq C_\delta \int_\Omega (\phi_{|\nabla_h v_h|})^* (h_\tria\, |f|)\dx + \delta\, \lVert F(\nabla u) - F(\nabla_h v_h)\rVert^2_{L^2(\Omega)} \\
&\quad + \delta\, \lVert F(\nabla_h u_h) - F(\nabla_h v_h)\rVert^2_{L^2(\Omega)}.
\end{align*}

\textit{Step 3 (Third addend in \eqref{eq:MainSplitProof}).}
Since $\cE e_h \in V_h \cap V$, we have
\begin{align*}
\langle A(\nabla_h u_h), \nabla \cE e_h\rangle_\Omega = \langle f,\cE e_h\rangle_\Omega = \langle A(\nabla u), \nabla \cE e_h\rangle_\Omega.
\end{align*}
Combining this observation with Proposition~\ref{prop:Properties}~\ref{itm:Udal} and \ref{itm:Young} and Lemma~\ref{lem:confComp} reveals
\begin{align*}
&\langle A(\nabla_h v_h) - A(\nabla_h u_h), \nabla \cE e_h\rangle_\Omega = \langle A(\nabla_h v_h) - A(\nabla u), \nabla \cE e_h\rangle_\Omega\\
& = \langle A(\nabla_h v_h) - A(\nabla u), \nabla_h e_h\rangle_\Omega + \langle A(\nabla_h v_h) - A(\nabla u),\nabla \cE e_h- \nabla_h e_h\rangle_\Omega \\
& \leq C_\delta\, \lVert F(\nabla u) - F(\nabla_h v_h)\rVert_{L^2(\Omega)}^2 + \delta\, \lVert F(\nabla_h u_h) - F(\nabla_h v_h)\rVert_{L^2(\Omega)}^2\\
&\quad + \delta\, \int_\Omega \phi_{|\nabla_h v_h|} (|\nabla_h (e_h - \cE e_h)|)\dx\\
&\leq  C_\delta\, \lVert F(\nabla u) - F(\nabla_h v_h)\rVert_{L^2(\Omega)}^2 + \delta \int_\Omega (\phi_{|\nabla_h v_h|})^* (h_\tria\, |f|)\dx \\
&\quad + \delta\, \lVert F(\nabla_h u_h) - F(\nabla_h v_h)\rVert_{L^2(\Omega)}^2.
\end{align*}

\textit{Step 4 (Conclusion).}
Combining the results from Step 1 to Step 3 with \eqref{eq:MainSplitProof}, absorbing the term $\delta\, \lVert F(\nabla_h u_h) - F(\nabla_h v_h)\rVert_{L^2(\Omega)}^2$, and applying Lemma~\ref{lem:Oscillation} concludes the proof. 
\end{proof}

In order to conclude the localized a priori error estimate in Theorem~\ref{thm:Apriori}, we introduce the non-conforming interpolation operator $\Inc\colon W^{1,1}_0(\Omega) \to V_h$, defined for all $v\in W^{1,1}_0(\tria)$ via the identity 
\begin{align*}
\dashint_\gamma \Inc v \ds = \dashint_\gamma v\ds\qquad\text{for all }\gamma \in \faces(\Omega).
\end{align*}
A well-known property, see for example \cite[Lem.~2]{OrtnerPraetorius11}, is the commuting diagram
\begin{align}\label{eq:Ince}
\nabla_h \Inc v = \Pi_0 \nabla v\qquad\text{for all }v\in W^{1,1}_0(\Omega). 
\end{align}
With this property and the quasi-best-approximation result in Theorem~\ref{thm:mainResult} we can verify the a priori error estimate in Theorem~\ref{thm:Apriori}.
\begin{proof}[Proof of Theorem~\ref{thm:Apriori}]
Let $u\in V$ and $u_h \in V_h$ denote the solutions to \eqref{eq:MinProb} and \eqref{eq:CRfem}, respectively. Theorem~\ref{thm:mainResult} yields the bound
\begin{align*}
\lVert F(\nabla u) - F(\nabla_h u_h) \rVert_{L^2(\Omega)}^2 \lesssim \osc^2(u,f;\tria) + \lVert F(\nabla u) - F(\nabla_h \Inc u) \rVert_{L^2(\Omega)}^2.
\end{align*}
Using Proposition~\ref{prop:Distances}, the commuting diagram property \eqref{eq:Ince}, the triangle inequality in Proposition~\ref{prop:Properties}~\ref{itm:triangleInequ}, Jensen's inequality, and the shift-change estimate in Proposition~\ref{prop:Properties}~\ref{itm:shiftChange} we conclude for all piecewise constant functions $\chi_\tria \in \mathbb{P}_0(\tria;\mathbb{R}^d)$ that %
\begin{equation}\label{eq:approxInc}
\begin{aligned}
&\lVert F(\nabla u) - F(\nabla_h \Inc u) \rVert_{L^2(\Omega)}^2 \eqsim \int_\Omega \phi_{|\nabla_h \Inc u|}(|\nabla u - \nabla_h \Inc u|)\dx \\
&\qquad = \int_\Omega \phi_{|\nabla_h \Inc u|}(|\nabla u - \Pi_0 \nabla u|)\dx \\
&\qquad \lesssim \int_\Omega \phi_{|\nabla_h \Inc u|}(|\nabla u - \chi_\tria|)\dx + \int_\Omega \phi_{|\nabla_h \Inc u|}(|\Pi_0 (\nabla u - \chi_\tria)|)\dx \\
&\qquad \leq 2 \int_\Omega \phi_{|\nabla_h \Inc u|}(|\nabla u - \chi_\tria|)\dx \\
&\qquad\leq C_\delta\, \lVert F(\nabla u) - F(\chi_\tria)\rVert_{L^2(\Omega)}^2 + \delta\, \lVert F(\nabla u) - F(\nabla_h \Inc u) \rVert_{L^2(\Omega)}^2.
\end{aligned}
\end{equation}
Absorbing the second term concludes the proof.
\end{proof}
Let us conclude this section with a short discussion on the higher-order property of the oscillation.
\begin{remark}[Higher-order term]\label{rem:HigherOrder}
Let us assume that $\phi \in W^{2,\infty}_\textup{loc}((0,\infty))$ is almost everywhere twice differentiable and we have constants $1< p_- \leq p_+ < \infty$ such that for any shift $a \geq 0$
\begin{align}\label{eq:index}
p_- \leq \frac{\phi_a''(t)t}{\phi_a'(t)}  + 1 \leq p_+\qquad\text{for almost all }t > 0.
\end{align}
Then one has for all $a,t,s\geq 0$ the bounds \cite[Lem.~A.2]{BehnDiening24}
\begin{align}\label{eq:Behn}
\begin{aligned}
p_- &\leq (\phi_a'(t)t)/\phi_a(t) \leq  p_+,\\
\min \lbrace t^{p_+} ,t^{p_-}\rbrace \phi_a(s) &\leq \phi_a(ts) \leq \max \lbrace t^{p_+},t^{p_-}\rbrace \phi_a(s).
\end{aligned}
\end{align}
Moreover, for all $a,b,t\geq 0$ \cite[Lem.~43]{DieningFornasierTomasiWank20} and \cite[Prop.~2.2]{DieningStorn25} state that 
\begin{align*}
\phi_a'(t) &\lesssim \phi_b'(t) +  \phi_b'(|a-b|) \quad \text{and}\quad
(\phi_a)^*(\phi_a'(t)) = \phi_a'(t)t - \phi_a(t)\leq \phi_a'(t)t.
\end{align*}
Using these estimates and Young's inequality, we can quantify the constants in the shift-change (Proposition~\ref{prop:Properties}~\ref{itm:shiftChange}) via
\begin{align*}
 p_-\phi_{|P|}(t) & \leq \phi_{|P|}'(t) t \lesssim \phi'_{|Q|}(t) t + \delta (\phi'_{|Q|}(|P-Q|)t /\delta)\\
& \leq \phi'_{|Q|}(t) t + \delta \big((\phi_{|Q|})^*(\phi'_{|Q|}(|P-Q|)) + \phi_{|Q|}(t /\delta)\big) \\
& \leq \phi'_{|Q|}(t) t + \delta \phi'_{|Q|}(|P-Q|)|P-Q| + \max \lbrace \delta^{1-p_+},\delta^{1-p_-}\rbrace\phi_{|Q|}(t) \\
&\leq ( p_+ + \max \lbrace \delta^{1-p_+},\delta^{1-p_-}\rbrace)\phi_{|Q|}(t) + p_+ \delta \phi_{|Q|}(|P-Q|).
\end{align*}
Since we need this property for the convex conjugate, we exploit the identity $(\phi_{a})^*(t) = (\phi^*)_{\phi'(a)}(t)$ for all $t,a\geq 0$ \cite[Lem.~33]{DieningFornasierTomasiWank20} and the property that $\phi_a^*$ satisfies the property in \eqref{eq:index} with indices $p_-$ and $p_+$ replaced by the dual exponents $p_+'$ and $p_-'$ \cite[Lem.~A.1]{BehnDiening24}.  This leads for all $t\geq 0$ to the bound
\begin{align*}
(\phi_{|P|})^*(t) = (\phi^*)_{\phi'(|P|)}(t) & \lesssim (p_-' + \max\lbrace \delta^{1-p'_-},\delta^{1-p'_+}\rbrace)(\phi_{|Q|})^*(t)\\
&\quad + p_-'  \delta (\phi_{|Q|})^*(|A(P)-A(Q)|).
\end{align*}
Using this shift-change, Proposition~\ref{prop:Distances}, and Proposition~\ref{prop:Properties}~\ref{itm:Udal}, we obtain  for all $T\in \tria$ with some constant $C$ independent of $\delta > 0$ the bound
\begin{align*}
&\int_T (\phi_{|\nabla u|})^* (h_T |f - \Pi_0 f|) \dx \leq \delta\, \lVert F(\nabla u) - F( \Pi_0 \nabla u) \rVert_{L^2(T)}^2 \\
&\qquad\qquad\qquad\qquad + C (1 + \max \lbrace \delta^{1-p_+'},\delta^{1-p_-'}\rbrace)\int_T (\phi_{ |\Pi_0 \nabla u|})^*  (h_T |f - \Pi_0 f|) \dx. 
\end{align*}
\Poincare's inequality (Proposition~\ref{prop:Properties}~\ref{itm:Poincare}) and \eqref{eq:Behn} bound for $h_T \leq 1$ the second addend by 
\begin{align*}
\int_T (\phi_{ |\Pi_0 \nabla u|})^*  (h_T |f - \Pi_0 f|) \dx &\lesssim \int_T (\phi_{ |\Pi_0 \nabla u|})^*  (h^2_T |\nabla f|) \dx\\
& \leq h_T^{2 p_+'} \int_T (\phi_{ |\Pi_0 \nabla u|})^*  (|\nabla f|) \dx.
\end{align*}
Combining the equivalent notions of distances in Proposition~\ref{prop:Distances} with the arguments in the proof of Proposition~\ref{prop:Properties}~\ref{itm:Poincare} allows us to bound the first addend by
\begin{align*}
\lVert F(\nabla u) - F( \Pi_0 \nabla u) \rVert_{L^2(T)}^2 \lesssim \min_{\chi_h \in \mathbb{P}_0(T;\mathbb{R}^d)} \lVert  F(\nabla u) - \chi_h \rVert_{L^2(T)}^2.
\end{align*}
Choosing $\delta \coloneqq h_T^{2(1-1/p_+')} = h_T^{2/p_+}$ yields for $h_T \leq 1$ the estimate
\begin{align*}
\int_T (\phi_{|\nabla u|})^* (h_T |f - \Pi_0 f|) \dx &\lesssim h_T^{2/p_+} \min_{\chi_h \in \mathbb{P}_0(T;\mathbb{R}^d)} \lVert  F(\nabla u) - \chi_h \rVert_{L^2(T)}^2\\
& \quad + h_T^{2/p_+}  h_T^2 \int_T (\phi_{ |\Pi_0 \nabla u|})^* (|\nabla f|) \dx.
\end{align*}
This shows that for sufficiently smooth right-hand sides the oscillation converges with a higher order than the approximation error $\lVert F (\nabla u) - F(\nabla_h u_h) \rVert_{L^2(\Omega)}^2$. 
\end{remark}
\section{Conforming finite element method}\label{sec:Confomring}
By combining the non-conforming interpolation operator $\Inc$ with the conforming companion operator $\cE$, we obtain an interpolation operator onto the lowest-order Lagrange finite element space $S^1_0(\tria) \coloneqq \mathbb{P}_1(\tria) \cap W^{1,1}_0(\Omega)$ defined by
\begin{align}\label{eq:IOperator}
\mathcal{I} \coloneqq \cE \circ \Inc\colon W^{1,1}_0(\Omega)\to S^1_0(\tria).
\end{align}
This operator has the following approximation properties.
\begin{theorem}[Approximation properties]\label{thm:ApproxProperties}
For any $v\in W^{1,1}_0(\Omega)$ one has the interpolation error
\begin{align*}
&\lVert F(\nabla v) - F(\nabla \mathcal{I}v)\rVert_{L^2(\Omega)}^2\\
&\qquad \lesssim \min_{\chi_\tria \in \mathbb{P}_0(\tria;\mathbb{R}^d)} \lVert F(\nabla v) - \chi_\tria\rVert_{L^2(\Omega)}^2 + \int_\Omega (\phi_{|\nabla v|})^*(h_\tria|\divergence A(\nabla v)|)\dx. 
\end{align*}
\end{theorem}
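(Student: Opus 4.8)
The plan is to exploit the factorization $\mathcal{I} = \mathcal{E}\circ\Inc$ and to estimate the non-conforming interpolation error and the enrichment error separately. Set $w_h\coloneqq\Inc v\in V_h$, so that $\mathcal{I}v = \cE w_h$. Applying the Euclidean triangle inequality pointwise to $F(\nabla v)$, $F(\nabla_h w_h)$, $F(\nabla\cE w_h)\in\mathbb{R}^d$, squaring, and integrating gives
\[
\lVert F(\nabla v) - F(\nabla\mathcal{I}v)\rVert_{L^2(\Omega)}^2 \lesssim \lVert F(\nabla v) - F(\nabla_h w_h)\rVert_{L^2(\Omega)}^2 + \lVert F(\nabla_h w_h) - F(\nabla\cE w_h)\rVert_{L^2(\Omega)}^2.
\]

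For the first term I would invoke the commuting diagram \eqref{eq:Ince}, which gives $\nabla_h w_h = \Pi_0\nabla v$, and then argue as in the chain \eqref{eq:approxInc}: by Proposition~\ref{prop:Distances} the term is $\eqsim\int_\Omega\phi_{|\nabla v|}(|\nabla v - \Pi_0\nabla v|)\dx$, and the triangle inequality of Proposition~\ref{prop:Properties}~\ref{itm:triangleInequ} together with Jensen's inequality bound it by $2\int_\Omega\phi_{|\nabla v|}(|\nabla v - \chi_\tria|)\dx\eqsim\lVert F(\nabla v) - F(\chi_\tria)\rVert_{L^2(\Omega)}^2$ for every $\chi_\tria\in\mathbb{P}_0(\tria;\mathbb{R}^d)$; since $Q\mapsto F(Q)$ is a radial bijection of $\mathbb{R}^d$, applying it elementwise preserves piecewise constancy, so the infimum over $\chi_\tria$ equals the best-approximation term $\min_{\chi_\tria}\lVert F(\nabla v) - \chi_\tria\rVert_{L^2(\Omega)}^2$.

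For the second term I would again use Proposition~\ref{prop:Distances} to write it as $\eqsim\int_\Omega\phi_{|\nabla_h w_h|}(|\nabla_h(w_h - \cE w_h)|)\dx$ and then apply the enrichment bound \eqref{eq:Enrichment2} of Lemma~\ref{lem:confComp} with the choices $v_h\coloneqq w_h$, $u_h\coloneqq 0$ (so that $e_h = w_h$), and with the original $v$ playing the role of the free $W^{1,1}_0(\Omega)$-function there; this yields
\[
\int_\Omega\phi_{|\nabla_h w_h|}(|\nabla_h(w_h - \cE w_h)|)\dx \lesssim \int_\Omega\phi_{|\nabla_h w_h|}(|\nabla_h w_h - \nabla v|)\dx + \int_\Omega(\phi_{|\nabla_h w_h|})^*(h_\tria\,|\divergence A(\nabla v)|)\dx.
\]
Using $\nabla_h w_h = \Pi_0\nabla v$ once more, both shifts equal $\phi_{|\Pi_0\nabla v|}$, and I would replace them by $\phi_{|\nabla v|}$ via the primal and dual shift-change estimates of Proposition~\ref{prop:Properties}~\ref{itm:shiftChange}. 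The shift-change remainders have the form $\delta\,|F(\nabla v) - F(\Pi_0\nabla v)|^2\eqsim\delta\,\phi_{|\nabla v|}(|\nabla v - \Pi_0\nabla v|)$, which — for $\delta$ small, after absorbing into the left-hand side of the theorem, or simply by the first-term estimate — are again controlled by $\min_{\chi_\tria}\lVert F(\nabla v) - \chi_\tria\rVert_{L^2(\Omega)}^2$. The surviving contributions are $\int_\Omega\phi_{|\nabla v|}(|\nabla v - \Pi_0\nabla v|)\dx$, handled by the best-approximation bound as in the first step, and $\int_\Omega(\phi_{|\nabla v|})^*(h_\tria\,|\divergence A(\nabla v)|)\dx$, which is precisely the data term on the right-hand side of the assertion (and may be $+\infty$, in which case there is nothing to prove).

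Collecting the three estimates yields the claim. I do not expect a genuine obstacle here: the proof is essentially a bookkeeping combination of the commuting diagram \eqref{eq:Ince}, the enrichment estimate \eqref{eq:Enrichment2}, and the shift-change lemma. The one point requiring attention is the matching of shifts — \eqref{eq:Enrichment2} naturally produces the shift $|\Pi_0\nabla v|$ while the statement is phrased with $|\nabla v|$ — and the bridge succeeds exactly because $F(\Pi_0\nabla v)$ is $L^2$-close to $F(\nabla v)$, so the shift-change correction is itself a best-approximation quantity rather than new data; beyond that one only has to track the $\delta$-absorptions carefully.
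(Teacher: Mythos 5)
Your proof is correct and follows essentially the same route as the paper's: split $\lVert F(\nabla v) - F(\nabla\mathcal{I}v)\rVert_{L^2}$ by the triangle inequality into the nonconforming interpolation error and the enrichment error, treat the first via the commuting diagram \eqref{eq:Ince} together with \eqref{eq:approxInc}, and treat the second via Proposition~\ref{prop:Distances} and the enrichment estimate \eqref{eq:Enrichment2} applied with $e_h = v_h = \Inc v$, $u_h=0$, and the free $W^{1,1}_0$-function equal to $v$. You are in fact slightly more careful than the paper's proof, which silently passes from the shift $|\nabla_h\Inc v|=|\Pi_0\nabla v|$ delivered by \eqref{eq:Enrichment2} to the shift $|\nabla v|$ appearing in the statement; your explicit use of the primal and dual shift-change of Proposition~\ref{prop:Properties}~\ref{itm:shiftChange}, with the remainder reabsorbed into the best-approximation term, supplies the missing bookkeeping, and your argument naturally produces the conjugate $(\phi_{|\nabla v|})^*(h_\tria|\divergence A(\nabla v)|)$ rather than the $\phi_{|\nabla v|}$ displayed in the theorem (as well as $v$ rather than $u$ inside the minimum), which strongly suggests these are typos in the paper's statement.
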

\begin{proof}
Let $v\in W^{1,1}_0(\Omega)$. The triangle inequality reveals 
\begin{align}\label{eq:ProofTemp022}
\begin{aligned}
&\lVert F(\nabla v) - F(\nabla \mathcal{I}v)\rVert_{L^2(\Omega)}\\
&\quad \leq \lVert F(\nabla v) - F(\nabla_h \Inc v)\rVert_{L^2(\Omega)} + \lVert F(\nabla \cE \Inc v) - F(\nabla_h \Inc v)\rVert_{L^2(\Omega)}.
\end{aligned}
\end{align}
The second addend is bounded due to Proposition~\ref{prop:Distances} and Lemma~\ref{lem:confComp} by
\begin{align}\label{eq:ProofTemp0223}
\begin{aligned}
&\lVert F(\nabla \cE \Inc v) - F(\nabla_h \Inc v)\rVert_{L^2(\Omega)}\\
&\qquad \lesssim \lVert F(\nabla v) - F(\nabla_h \Inc v)\rVert_{L^2(\Omega)} + \int_\Omega (\phi_{|\nabla v|})^* ( h_\tria |\divergence A(\nabla v)|)\dx.
\end{aligned}
\end{align}
As shown in \eqref{eq:approxInc}, the first addend on the right-hand side of \eqref{eq:ProofTemp022} and \eqref{eq:ProofTemp0223} is bounded by $ \min_{\chi_\tria \in \mathbb{P}_0(\tria;\mathbb{R}^d)} \lVert F(\nabla v) - \chi_\tria\rVert_{L^2(\Omega)}^2$. Combining these results concludes the proof.
\end{proof}
\begin{remark}[$\divergence A(\nabla v)$]
In contrast to the linear case, cf.~\cite{Veeser16}, the estimate in Theorem~\ref{thm:ApproxProperties} contains the additional term $\int_\Omega \phi_{|\nabla v|}(h_\tria|\divergence A(\nabla v)|)\dx$. This is due to the fact that the shift-change in \eqref{eq:Proofaaassskkkkk-}, which is not needed in the linear case, requires control of the entire jump $\llbracket \nabla_h \bigcdot \rrbracket_\gamma$ instead of the tangential jump $\llbracket \gamma_\tau \nabla_h \bigcdot \rrbracket_\gamma$. Controlling the normal jump leads to the additional contribution. 
\end{remark}
With the operator $\mathcal{I}$ in \eqref{eq:IOperator}, we can verify the novel a priori error estimate for the lowest-order Lagrange FEM in Theorem~\ref{thm:AprioriConf}.
\begin{proof}[Proof of Theorem~\ref{thm:AprioriConf}]
Let $u\in V$ and $u_h^c \in S_0^1(\tria)$ denote the solutions to \eqref{eq:MinProb} and \eqref{eq:LagrangeFEM}, respectively.
Since $S_0^1(\tria)$ is a conforming space, established quasi-optimality results \cite[Lem.~5.2]{DieningRuzicka07} yield
\begin{align*}
\lVert F(\nabla u) - F(\nabla u^c_h) \rVert_{L^2(\Omega)}^2& \lesssim \min_{v_h^c\in S_0^1(\tria)}  \lVert F(\nabla u) - F(\nabla v^c_h) \rVert_{L^2(\Omega)}^2 \\
& \leq \lVert F(\nabla u) - F(\nabla \mathcal{I} u) \rVert_{L^2(\Omega)}^2.
\end{align*}
Theorem~\ref{thm:ApproxProperties} and the property $-\divergence A(\nabla u) = f$ bound the latter term by 
\begin{align*}
\lVert F(\nabla u) - F(\nabla \mathcal{I} u) \rVert_{L^2(\Omega)}^2 \lesssim\min_{\chi_\tria \in \mathbb{P}_0(\tria;\mathbb{R}^d)} \lVert F(\nabla u) - \chi_\tria\rVert_{L^2(\Omega)}^2  + \int_\Omega \phi_{|\nabla u|}(h_\tria |f|)\dx.
\end{align*}
The shift-change in Proposition~\ref{prop:Properties}~\ref{itm:shiftChange} and Lemma~\ref{lem:Oscillation} yield
\begin{align*}
\int_\Omega \phi_{|\nabla u|}(h_\tria |f|)\dx & \lesssim \int_\Omega \phi_{|\nabla_h \Inc u|}(h_\tria |f|)\dx + \lVert F(\nabla u) - F(\nabla_h \Inc u) \rVert_{L^2(\Omega)}^2 \\
&\lesssim \lVert F(\nabla u) - F(\nabla_h \Inc u) \rVert_{L^2(\Omega)}^2  + \osc^2(u,f;\tria).
\end{align*}
Applying \eqref{eq:approxInc} concludes the proof.
\end{proof}

\section{Numerical experiments}\label{sec:NumExp}
We conclude this paper with a numerical investigation of the lowest-order Lagrange and Crouzeix--Raviart FEM for the $p$-Laplacian; that is, the integrand $\phi(t) \coloneqq t^p/p$ for all $t\geq 0$. We solve the problem with an adaptive routine discussed below. Our implementation uses NGSolve \cite{Schoeberl97,Schoeberl14} and can be found in \cite{StornCode}.
\subsection{Adaptive scheme}
For small and large exponents $p$ typical iterative approaches such as Newton's method or gradient descent schemes fail to compute the discrete solution. However, the regularized \Kacanov{} scheme introduced in \cite{DieningFornasierTomasiWank20} for $p<2$ and modified for $p>2$ in \cite{BalciDieningStorn23,DieningStorn25} provides a method with guaranteed convergence for $p \in (1,2]$ and $p\in [2,\infty)$, respectively. The overall idea is to minimize a regularized energy $\cJ_\varepsilon(v_h) \coloneqq \int_\Omega \phi_\varepsilon(|\nabla_h v_h|) - fv_h\dx$ over all $v_h \in V_h$ by iteratively solving weighted Laplace problems. The minimizer of the regularized energy converges towards the minimizer of the energy $\cJ$ as the relaxation interval $\varepsilon = (\varepsilon_-,\varepsilon_+) \subset (0,\infty)$ is enlarged. Given an iterate $u_{h,n}\in V_h$, the impact of the regularization can  be measured by comparing the energy differences 
\begin{align*}
\eta_-^2 & \coloneqq \cJ_\varepsilon(u_{h,n}) - \cJ_{(0,\varepsilon_+)}(u_h),\\
\eta_+^2 & \coloneqq \cJ_\varepsilon(u_{h,n}) - \cJ_{(\varepsilon_-,\infty)}(u_h).
\end{align*} 
We use the discrete duality based error estimator $\eta^2_\textup{iter}$ discussed in \cite[Sec.~4]{DieningStorn25} to bound the iteration error 
\begin{align*}
\cJ_\varepsilon(u_{h,n}) - \min_{v_h \in V_h} \cJ_{\varepsilon}(v_h) \leq \eta^2_\textup{iter}.
\end{align*}
Furthermore, we use the residual based error estimator $\eta^2_\tria = \eta^2_\tria(u_{h,n},\varepsilon)$ in \cite{DieningKreuzer08}, to estimate the distance 
\begin{align*}
\cJ_\varepsilon(u_{h,n}) -  \min_{v \in V} \cJ_{\varepsilon}(v) + \osc^2(u_{h,n},f;\tria) \eqsim \eta^2_\tria.
\end{align*} 
However, for $p \gg 2$ this estimator leads to the refinements of very few simplices per iteration, indicating the struggles of this estimator when applied to inexact discrete solutions and large values of $p$. Instead, we use for  $p>2$ the dual discrete iterate $\sigma_{h,n}\in \mathbb{P}_0(\tria;\mathbb{R}^d)$, see \cite[Def.~5.1]{DieningStorn25}, and the error indicator $\eta^2_\tria \coloneqq \sum_{T\in \tria} \eta^2(T)$ reading with $F_\varepsilon^*(Q) \coloneqq \sqrt{(\phi_\varepsilon^*)'(|Q|)/|Q|}\, Q$ for all $Q\in \mathbb{R}^d$
\begin{align}\label{eq:AltEst}
\eta^2(T) \coloneqq \sum_{{\gamma \in \faces(\Omega), \gamma \subset T}} \int_\gamma h_T\, |\llbracket F_\varepsilon^*(\sigma_{h,n})\rrbracket_\gamma|^2 \ds.
\end{align}
We use these error indicators to drive the adaptive scheme in the sense that 
\begin{itemize}
\item we perform another \Kacanov{} iteration if $\eta^2_\textup{iter}$ is the largest error indicator,
\item we increase $\varepsilon_+$ by a factor of 2 if $\eta_+^2$ is the largest error indicator,
\item we decrease $\varepsilon_-$ by a factor of 1/2 if $\eta_-^2$ is the largest error indicator,
\item we perform an adaptive mesh refinement with D\"orfler marking strategy and bulk parameter $\theta = 0.3$ with $\eta_\tria^2$ as refinement indicator if $\eta^2_\tria$ is the largest error indicator.
\end{itemize}
With this strategy we compute numerical approximations of the exact solution. We terminate the computation once  $\dim V_h > 500\,000$.
We then decrease $\varepsilon_-$ by $0.01$ and increase $\varepsilon_+$ by $100$ and perform 50 further \Kacanov{} iterations to obtain an accurate reference solution. This reference solution is used to approximate the error for all discrete approximations with $\dim V_h \leq 50\, 000$. 
\subsection{Experiments}
We solve the $p$-Laplacian with underlying L-shaped domain $\Omega = (-1,1)^2\setminus [0,1)^2$, constant right-hand side $f=2$, and inhomogeneous Dirichlet boundary data $u(x,y) = 1-|y|$ on $\partial \Omega$. We use the adaptive scheme discussed in the previous subsection. Figure~\ref{fig:Exp1} displays the convergence history of the relative error $\lVert F(\nabla u) - F(\nabla u_{h,n})\rVert_{L^2(\Omega)}^2 / \lVert F(\nabla u)\rVert_{L^2(\Omega)}^2$ for the lowest-order Lagrange and Crouzeix--Raviart FEM with exponent $p=1.1$, where $u$ denotes the reference solution computed on a finer grid with enlarged relaxation interval. The rate of convergence seems to be better than $\textup{ndof}^{-1/2}$, but worse than the rate $\textup{ndof}^{-1}$ observed in the linear case $p=2$. This is due to reduced regularity properties, cf.~the discussion in \cite[Sec.~7.2]{BalciDieningStorn23}. The experiment suggests a similar behavior of the Lagrange and Crouzeix--Raviart FEM, with the error of the Lagrange FEM being slightly smaller which is likely due to the fewer degrees of freedom on each mesh. Indeed, the a priori error estimate in Theorem~\ref{thm:AprioriConf} suggests that the additional degrees of freedom of the Crouzeix--Raviart space do not provide more beneficial approximation properties than the Lagrange space. This observation extends to the case $p>2$, illustrated by the convergence history plot in Figure~\ref{fig:Exp2} for $p=10$. More precisely, the Lagrange and Crouzeix--Raviart FEM behave similarly, with the Lagrange FEM leading to slightly better results. Note that the rate of convergence seems to be much better than in Figure~\ref{fig:Exp1}. On the other hand, the error does not decrease monotonically. This is due to the fact that the dual regularized \Kacanov{} scheme used for the computation is solely monotone with respect to the discrete dual energy which is not displayed in the convergence history plot.

\pgfplotstableread{Experiments/AccumulatedDofError_p1.1CR_False.dat}\datatableA
\pgfplotstablecreatecol[
    create col/expr accum={
        \pgfmathaccuma + \thisrow{ndof} 
    }{0} 
]{accumulated_ndof_A}\datatableA
\pgfplotstableread{Experiments/AccumulatedDofError_p1.1CR_True.dat}\datatableB
\pgfplotstablecreatecol[
    create col/expr accum={
        \pgfmathaccuma + \thisrow{ndof}
    }{0}
]{accumulated_ndof_B}\datatableB

\begin{figure}[ht!]
\begin{tikzpicture}
\begin{axis}[
clip=false,
width=.5\textwidth,
height=.45\textwidth,
ymode = log,
xmode = log,
xlabel = {$\textup{ndof}$},
cycle multi list={\nextlist MyCol1b},
scale = {1},
clip = true,
legend cell align=left,
legend style={legend columns=1,legend pos= south west,font=\fontsize{7}{5}\selectfont}
]
	\addplot table [x=ndof,y=relat_error] {Experiments/Error_p1.1CR_False.dat};
	\addplot table [x=ndof,y=relat_error] {Experiments/Error_p1.1CR_True.dat};
	\legend{{Lagrange},{CR FEM},{$\mathcal{O}(\textup{ndof}^{-1/2})$}};
	\addplot[dash dot,sharp plot,update limits=false] coordinates {(1e2,2e-1) (1e6,2e-3)};
\end{axis}
\end{tikzpicture}
\begin{tikzpicture}
\begin{axis}[
clip=false,
width=.5\textwidth,
height=.45\textwidth,
ymode = log,
xmode = log,
xlabel = {accumulated $\textup{ndof}$},
cycle multi list={\nextlist MyCol1b}, 
scale = {1},
clip = true,
legend cell align=left,
legend style={legend columns=1,legend pos=south west,font=\fontsize{7}{5}\selectfont}
]
	\addplot table [x=accumulated_ndof_A,y=relat_error] {\datatableA};
	\addplot table [x=accumulated_ndof_B,y=relat_error] {\datatableB};
	\legend{{Lagrange},{CR FEM}};
\end{axis}
\end{tikzpicture}
\caption{Convergence history of the relative errors $\lVert F(\nabla u) - F(\nabla u_{h,n})\rVert_{L^2(\Omega)}^2 / \lVert F(\nabla u)\rVert_{L^2(\Omega)}^2$ for the lowest-order Lagrange and Crouzeix--Raviart FEM with $p=1.1$. The left-hand side displays the error of iterates $u_{h,n}$ before a mesh refinement, plotted against the degrees of freedom. The right-hand side displays the error of any iterate $u_{h,n}$ plotted against the accumulated number of degrees of freedom.} \label{fig:Exp1}
\end{figure}%

\pgfplotstableread{Experiments/AccumulatedDofError_p10CR_False.dat}\datatableC
\pgfplotstablecreatecol[
    create col/expr accum={
        \pgfmathaccuma + \thisrow{ndof} 
    }{0} 
]{accumulated_ndof_C}\datatableC

\pgfplotstableread{Experiments/AccumulatedDofError_p10CR_True.dat}\datatableD

\pgfplotstablecreatecol[
    create col/expr accum={
        \pgfmathaccuma + \thisrow{ndof}
    }{0}
]{accumulated_ndof_D}\datatableD

\begin{figure}[ht!]
\begin{tikzpicture}
\begin{axis}[
clip=false,
width=.5\textwidth,
height=.45\textwidth,
ymode = log,
xmode = log,
xlabel = {$\textup{ndof}$},
cycle multi list={\nextlist MyCol1b},
scale = {1},
clip = true,
legend cell align=left,
legend style={legend columns=1,legend pos= south west,font=\fontsize{7}{5}\selectfont}
]
	\addplot table [x=ndof,y=relat_error] {Experiments/Error_p10CR_False.dat};
	\addplot table [x=ndof,y=relat_error] {Experiments/Error_p10CR_True.dat};
		\addplot[dash dot,sharp plot,update limits=false] coordinates {(1e2,1e-1) (1e6,1e-5)};
	\legend{{Lagrange},{CR FEM},{$\mathcal{O}(\textup{ndof}^{-1})$}};
\end{axis}
\end{tikzpicture}
\begin{tikzpicture}
\begin{axis}[
clip=false,
width=.5\textwidth,
height=.45\textwidth,
ymode = log,
xmode = log,
xlabel = {accumulated $\textup{ndof}$},
cycle multi list={\nextlist MyCol1b}, 
scale = {1},
clip = true,
legend cell align=left,
legend style={legend columns=1,legend pos=south west,font=\fontsize{7}{5}\selectfont}
]
	\addplot table [x=accumulated_ndof_C,y=relat_error] {\datatableC};
	\addplot table [x=accumulated_ndof_D,y=relat_error] {\datatableD};
	\legend{{Lagrange},{CR FEM}};
\end{axis}
\end{tikzpicture}
\caption{Convergence history of the relative errors $\lVert F(\nabla u) - F(\nabla u_{h,n})\rVert_{L^2(\Omega)}^2 / \lVert F(\nabla u)\rVert_{L^2(\Omega)}^2$ for the lowest-order Lagrange and Crouzeix--Raviart FEM with $p=10$. The left-hand side displays the error of iterates $u_{h,n}$ before a mesh refinement, plotted against the degrees of freedom. The right-hand side displays the error of any iterate $u_{h,n}$ plotted against the accumulated number of degrees of freedom.}\label{fig:Exp2}
\end{figure}
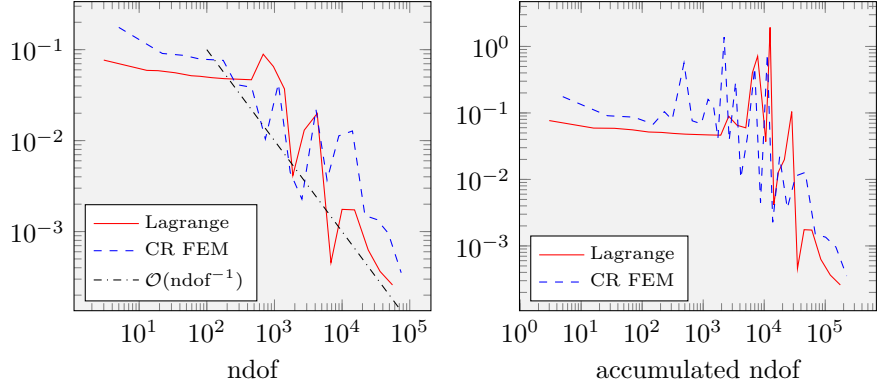%

\section*{Conclusion}
This paper provides the first quasi-optimal approximation result for the Crouzeix--Raviart FEM applied to the $p$-Laplace problem. Our analysis overcomes several challenges, including in particular the treatment of tangential jump contributions. As a byproduct, we prove that the Lagrange FEM enjoys the same localized approximation properties as the Crouzeix--Raviart FEM, a fact further supported by our numerical experiments.

However, in contrast to the adaptive scheme for the conforming case, see for example \cite{DieningFornasierTomasiWank20,BalciDieningStorn23}, the adaptive scheme for the Crouzeix--Raviart FEM currently lacks solid theoretical foundations: a reliable and efficient a posteriori error estimator, as available in the conforming setting \cite{DieningKreuzer08}, is not known. In fact, we observed difficulties for a naive extension of the conforming estimator from \cite{DieningKreuzer08} for large values $p \gg 2$, which necessitated the alternative error indicator in \eqref{eq:AltEst}. This motivates further theoretical investigations aimed at improving the behavior of the adaptive non-conforming approach. Our novel treatment of the tangential jump contributions may constitute an important step towards a rigorous theoretical understanding of the corresponding a posteriori error estimator.

The result is particularly interesting in the regimes $p \ll 2$ and $p \gg 2$, since the equivalence constants in Theorems~\ref{thm:Apriori} and \ref{thm:AprioriConf}, which imply a comparable accuracy of the conforming and non-conforming methods, degenerate as $p$ approaches one or infinity. Hence, even though our experiments indicate a similar behavior of the adaptive conforming and non-conforming schemes, there may still be beneficial properties of the non-conforming method in these regimes that are currently hidden in our analysis due to the degenerating equivalence constants.

A further advantage of the Crouzeix--Raviart FEM is its ability to approximate minimizers in the presence of a Lavrentiev gap \cite{Ortner11,BalciOrtnerStorn22}. A rigorous a priori error analysis for such energies remains an open problem that may benefit from the techniques developed in this paper. Another application of the Crouzeix--Raviart FEM is the computation of lower eigenvalue bounds, as carried out in the linear case in \cite{CarstensenGedicke14}. Extending these results to the $p$-Laplace setting is again an open problem, for which our novel analysis provides promising groundwork.

\subsection*{Acknowledgments.}
The author is grateful to Professor Tabea Tscherpel and Professor Lars Diening for insightful discussions that initiated the research presented here. He also thanks Professor Dietmar Gallistl for carefully reading the manuscript and for identifying an initial inaccuracy in the treatment of the boundary. Last but not least, he thanks the referees, whose constructive suggestions led, among other things, to the inclusion of Remark~\ref{rem:HigherOrder}.
\printbibliography

\end{document}